\providecommand{\U}[1]{\protect \rule{.1in}{.1in}}
\newtheorem{theorem}{Theorem}[section]
\newtheorem{definition}[theorem]{Definition}
\newtheorem{assumption}[theorem]{Assumption}
\newtheorem{example}[theorem]{Example}
\newtheorem{lemma}[theorem]{Lemma}
\newtheorem{proposition}[theorem]{Proposition}
\newtheorem{remark}[theorem]{Remark}
\newenvironment{proof}[1][Proof]{\noindent \textbf{#1.} }{\  \rule{0.5em}{0.5em}}
\numberwithin{equation}{section}
\begin{document}

\title{Probabilistic approximation of fully nonlinear second-order PIDEs with
convergence rates for the universal robust limit theorem \thanks{The authors would like to express their gratitude and appreciation to the Editor, Associate Editor and both referees for their exceptionally careful reading and insightful comments, which greatly helped and guided them to substantially improve the paper. This work is
supported by the National Natural Science Foundation of China (No.~12326603,
11671231, 12571512), the Natural Science Foundation of Shandong Province (No.~ZR2023QA090), the China Postdoctoral Science Foundation (No.~2024M761844), and the Postdoctoral Program of Qingdao City (No.~QDBSH20250102064).}}
\author{Lianzi Jiang \thanks{College of Mathematics and Systems Science, Shandong
University of Science and Technology, Qingdao, Shandong 266590, PR China.
jianglianzi95@163.com.}
\and Mingshang Hu\thanks{Zhongtai Securities Institute for Financial Studies,
Shandong University, Jinan, Shandong 250100, PR China.
humingshang@sdu.edu.cn.}
\and Gechun Liang\thanks{Department of Statistics, The University of Warwick,
Coventry CV4 7AL, U.K. g.liang@warwick.ac.uk.}}
\date{}
\maketitle

\textbf{Abstract}. This paper develops a probabilistic approximation scheme for a class of nonstandard,
fully nonlinear second-order partial integro-differential equations (PIDEs)
associated with nonlinear L\'evy processes under Peng’s
$G$-expectation framework. The PIDE features a supremum over a family of
$\alpha$-stable L\'evy measures, possibly degenerate diffusion coefficients,
and a non-separable uncertainty set, which places it outside the scope of
existing numerical theories for PIDEs.
We construct a recursive, piecewise-constant approximation of the viscosity
solution and establish explicit error estimates for the scheme. As a key
application, our results yield quantitative convergence rates for the
universal robust limit theorem under sublinear expectations. This provides a
unified treatment of Peng’s robust central limit theorem and law of large
numbers, as well as the $\alpha$-stable limit theorem of Bayraktar and Munk,
together with explicit Berry-Esseen-type bounds.
\newline

\textbf{Keywords} Partial-integral differential equation, probabilistic
approximation scheme, universal robust limit theorem, convergence rate, error
estimate, sublinear expectation.\newline

\textbf{MSC-classification} 60F05, 65M15, 60H30, 45K05.

\section{Introduction}

The Feynman-Kac formula establishes a profound connection between PDEs and
probability theory by expressing PDE solutions as expectations of stochastic
processes. This connection not only enables the analysis of the convergence of
numerical schemes for PDEs via probabilistic limit theorems, but also enriches
probabilistic methods through insights from PDE techniques. In this paper, we
extend this connection to sublinear expectations, specifically Peng's
$G$-expectations \cite{P2007,P20081,P2010}, which provide a robust, nonlinear
framework for modeling financial uncertainties.

We focus on the following nonstandard, fully nonlinear second-order partial
integro-differential equation (PIDE) and its probabilistic approximation,
analyzing convergence rates through the framework of $G$-expectation theory
\begin{equation}
\left \{
\begin{array}
[c]{l}%
\partial_{t}u(t,x,y,z)-G\left(  D_{y}u(t,x,y,z),D_{x}^{2}%
u(t,x,y,z),u(t,x,y,z+\cdot)\right)  =0,\\
u(0,x,y,z)=\phi(x,y,z),\quad \forall(t,x,y,z)\in \lbrack0,1]\times
\mathbb{R}^{3d},
\end{array}
\right.  \label{PIDE}%
\end{equation}
where $G:\mathbb{R}^{d}\times \mathbb{S}^{d}\times C_{b}^{2}(\mathbb{R}%
^{d})\rightarrow \mathbb{R}$ is defined as
\begin{equation}
G(p,A,\varphi(\cdot)):=\sup_{(F_{\mu},q,Q)\in \Theta}\left \{  \int
_{\mathbb{R}^{d}}\delta_{\lambda}\varphi(0)F_{\mu}(d\lambda)+\langle
p,q\rangle+\frac{1}{2}\text{tr}[AQ]\right \}  , \label{functionG}%
\end{equation}
with $\delta_{\lambda}\varphi(z):=\varphi(z+\lambda)-\varphi(z)-\langle
D\varphi(z),\lambda \rangle$. Here, $\Theta \subset \mathcal{L}\times
\mathbb{R}^{d}\times \mathbb{S}_{+}(d)$, where $\mathbb{S}_{+}(d)$ denotes the
set of $d\times d$ symmetric, positive semi-definite matrices, and
$\mathcal{L}$ is a set of $\alpha$-stable L\'{e}vy measures with $\alpha
\in(1,2)$. This PIDE is nonstandard due to the supremum over a set of L\'{e}vy
measures, where $F_{\mu}\in \mathcal{L}$ may exhibit a singularity at the
origin, potentially causing the integral term to degenerate, and the diffusion
matrix $Q$ may be degenerate. Existing studies on numerical solutions for
PIDEs (e.g., \cite{BCJ2019,BJK2010,CJ2025,CRR2016,DEJ2018,JKC2008}) typically
focus on controlling the coefficients $\delta_{\lambda}\varphi(\cdot)$ or exclude
diffusion terms, rendering their results inapplicable to our setting.
Moreover, since $\Theta$ is not necessarily a Cartesian product and may
involve coupling constraints between their elements, the supremum in
\eqref{functionG} cannot be decomposed into independent suprema over $F_{\mu}%
$, $q$, and $Q$. This prevents the separation of \eqref{PIDE} into distinct
equations and limiting the applicability of existing numerical results.

The study of \eqref{PIDE} is intimately linked to $G$-expectation theory and its
associated nonlinear L\'{e}vy processes. It is established in \cite{NN2017}
(see also \cite{HP2021} for finite activity jumps and \cite{DKN2020} for a
nonlinear semigroup approach) that \eqref{PIDE} admits a unique viscosity
solution with a Feynman-Kac representation
\[
u(t,x,y,z)=\mathbb{\tilde{E}}[\phi(x+\xi_{t},y+\eta_{t},z+\zeta_{t})],
\]
where $(\xi_{t},\eta_{t},\zeta_{t})$, $t\in \lbrack0,1]$, is a nonlinear
L\'{e}vy process under the sublinear expectation $\mathbb{\tilde{E}}$, and
$\phi \in C_{b,\text{Lip}}(\mathbb{R}^{3d})$, the space of bounded, Lipschitz
continuous functions. Related probabilistic aspects of fully nonlinear PIDEs
and their corresponding nonlinear L\'{e}vy processes are explored in
\cite{Kuhn2019}, with nonlinear semigroups studied in \cite{DKN2020,NR2021}.
Additionally, second-order backward stochastic differential equations with
jumps, introduced in \cite{KDZ2015}, extend the diffusion case considered in
\cite{STZh2011,STZh2012,STZh2013}.

In this paper, we propose a probabilistic approximation scheme for
\eqref{PIDE}, constructed recursively via piecewise constant approximations of
viscosity solutions. We derive error bounds for the scheme and compute
explicit convergence rates in concrete examples. A key application is
quantifying convergence rates for the universal robust limit theorem
established in our prior work \cite{HJLP2022}, including an explicit
Berry-Esseen-type bound.

The universal robust limit theorem for nonlinear L\'{e}vy processes under
sublinear expectations generalizes Peng's robust central limit theorem and law
of large numbers \cite{P2019} and Bayraktar-Munk's robust $\alpha$-stable
limit theorem \cite{BM2016}. Specifically, let $\{(X_{i},Y_{i},Z_{i}%
)\}_{i=1}^{\infty}$ be an i.i.d. sequence of $\mathbb{R}^{3d}$-valued random
variables on a sublinear expectation space $(\Omega,\mathcal{H},\mathbb{\hat
{E}})$.
Define
\[
S_{n}^{1}:=\sum_{i=1}^{n}X_{i},\quad S_{n}^{2}:=\sum_{i=1}^{n}Y_{i},\quad
S_{n}^{3}:=\sum_{i=1}^{n}Z_{i}.
\]
Under moment and consistency conditions, we proved in \cite{HJLP2022} that
there exists a nonlinear L\'{e}vy process $(\xi_{t},\eta_{t},\zeta_{t})$,
$t\in \lbrack0,1]$, associated with an uncertainty set $\Theta$, such that
\[
\lim_{n\rightarrow \infty}\mathbb{\hat{E}}\left[  \phi \left(  \frac{S_{n}^{1}%
}{\sqrt{n}},\frac{S_{n}^{2}}{n},\frac{S_{n}^{3}}{n^{1/\alpha}}\right)
\right]  =\mathbb{\tilde{E}}[\phi(\xi_{1},\eta_{1},\zeta_{1})]=u(1,0,0,0),
\]
where $\mathbb{\tilde{E}}$ is a sublinear expectation (possibly distinct from
$\mathbb{\hat{E}}$). Since $(X_{i},Y_{i},Z_{i})$ are not necessarily
independent, $\Theta$ is not a Cartesian product, preventing the use of
existing robust limit theorems. While \cite{HJLP2022} employed weak
convergence methods, the convergence rate remained open. Here, we address this
gap by establishing the convergence rate for the probabilistic approximation
scheme for \eqref{PIDE}, thereby deriving the convergence rate for the
universal robust limit theorem.

More precisely, to study the convergence rate, we construct a piecewise
constant time discretization of the viscosity solution $u$ to
\eqref{PIDE}, based on the i.i.d.\ random variables
$(X,Y,Z)\overset{d}{=}(X_1,Y_1,Z_1)$ defined on the sublinear expectation space
$(\Omega,\mathcal H,\hat{\mathbb E})$. For a fixed step size $h\in(0,1)$,
we define $u_h:[0,1]\times\mathbb R^{3d}\to\mathbb R$ recursively by
\begin{equation}\label{approx_scheme}
\begin{array}{ll}
u_h(t,x,y,z)=\phi(x,y,z), & (t,x,y,z)\in[0,h)\times\mathbb R^{3d},\\[0.2cm]
u_h(t,x,y,z)
=\hat{\mathbb E}\!\left[
u_h\!\left(t-h,x+h^{1/2}X,y+hY,z+h^{1/\alpha}Z\right)
\right], &
(t,x,y,z)\in[h,1]\times\mathbb R^{3d}.
\end{array}
\end{equation}

The convergence analysis of this numerical scheme yields an explicit bound
for the error $|u_h-u|$ between the discrete approximation $u_h$ and the
viscosity solution $u$ of \eqref{PIDE}. By iterating the scheme as in
\cite{HL2020}, and choosing $h=1/n$ and $(t,x,y,z)=(1,0,0,0)$, we obtain
\[
u_{1/n}(1,0,0,0)
=
\hat{\mathbb E}\!\left[
\phi\!\left(
\frac{S_n^1}{\sqrt n},
\frac{S_n^2}{n},
\frac{S_n^3}{n^{1/\alpha}}
\right)
\right].
\]
Consequently, the convergence rate of the numerical approximation $u_h$ to
the solution $u$ directly yields the convergence rate in the universal robust
limit theorem.

The convergence of numerical schemes to viscosity solutions of fully nonlinear
PDEs was established by Barles and Souganidis \cite{BS1991}, who showed that
monotone, stable, and consistent schemes converge if a comparison principle
holds. Convergence rates were later addressed by Krylov's method of shaking
coefficients \cite{Krylov1997,Krylov1999,Krylov2000}, extended by Barles and
Jakobsen \cite{BJ2002,BJ2005,BJ2007} and applied to PIDEs in
\cite{BCJ2019,BJK2010,CJ2025,CRR2016,DEJ2018,JKC2008}. However, the works on
PIDEs focus on controlling $\delta_{\lambda}\varphi(\cdot)$ or exclude diffusion
terms, making them inapplicable here. For robust limit theorems under
sublinear expectations, Krylov \cite{Krylov2020} used stochastic control
methods to establish convergence rates for Peng's robust central limit
theorem, addressing degenerate cases not covered by \cite{FPSS2019,Song2020}
using Stein's method. The use of PDE numerical scheme analysis for robust
limit theorems was initiated in \cite{HL2020} for the central limit theorem
and \cite{HJL2021,J2023,JL2023} for the $\alpha$-stable limit theorem.

We establish error bounds for the probabilistic approximation scheme
\eqref{approx_scheme} associated with the fully nonlinear PIDE \eqref{PIDE}
for the case $\alpha \in(1,2)$, thereby generalizing the results of
\cite{HL2020, HJL2021, J2023}. A key challenge arises from the lack of
independence among the components $X$, $Y$, and $Z$, which renders existing
robust limit theorems inapplicable. To address this, we employ a recursive
summation technique to derive regularity estimates for the scheme.
Furthermore, we introduce new moment and consistency conditions (Assumption
\ref{assump2}) to ensure the scheme's consistency. By combining the comparison
principle with a suitable regularization procedure, we obtain general error
bounds and derive novel convergence rates for the universal robust limit
theorem under sublinear expectations. Concrete examples are provided to
illustrate our results and demonstrate the sharpness of the convergence rates.

The paper is organized as follows. Section 2 introduces the notation, reviews
key properties of sublinear expectations, and presents the universal robust
limit theorem together with the main results. Section 3 provides the detailed
proofs, including the regularity analysis, consistency arguments, the
comparison principle, and the derivation of error bounds. Section 4 presents
concrete examples that illustrate the convergence rates with the consistency
estimates for these examples. Section 5 concludes the paper.

\section{Preliminaries and main results\label{main result}}

\subsection{Sublinear expectations and nonlinear L\'{e}vy processes}

The sublinear expectation framework, introduced by Peng in
\cite{P2007,P20081,P2010}, provides a powerful generalization of classical
probability theory that accommodates model uncertainty-particularly in
situations involving volatility and jump uncertainties, where an equivalent
change of probability measure may fail to apply. It has been widely applied in
the study of nonlinear stochastic processes, risk measures, and robust
financial modeling.

Let $\mathcal{H}$ be a linear space of real-valued random variables defined on
a sample space $\Omega$. Suppose that $\mathcal{H}$ is stable under Lipschitz
transformations in the following sense: for any finite collection $X_{1},
\ldots, X_{n} \in \mathcal{H}$ and any Lipschitz continuous function $\varphi:
\mathbb{R}^{n} \to \mathbb{R}$, the composition $\varphi(X_{1}, \ldots, X_{n})$
also belongs to $\mathcal{H}$. This stability ensures that $\mathcal{H}$ is
closed under functionals constructed via Lipschitz operations.

\begin{definition}
A functional $\mathbb{\hat{E}}: \mathcal{H} \rightarrow \mathbb{R}$ is called a
\emph{sublinear expectation} if, for all $X, Y \in \mathcal{H}$, the following
properties hold:
\end{definition}

\begin{description}
\item[(i)] \textbf{Monotonicity}: If $X \geq Y$, then $\mathbb{\hat{E}}[X]
\geq \mathbb{\hat{E}}[Y]$;

\item[(ii)] \textbf{Constant preservation}: $\mathbb{\hat{E}}[c] = c$ for all
constants $c \in \mathbb{R}$;

\item[(iii)] \textbf{Sub-additivity}: $\mathbb{\hat{E}}[X + Y] \leq
\mathbb{\hat{E}}[X] + \mathbb{\hat{E}}[Y]$;

\item[(iv)] \textbf{Positive homogeneity}: $\mathbb{\hat{E}}[\lambda X] =
\lambda \mathbb{\hat{E}}[X]$ for all $\lambda> 0$.
\end{description}

Based on the definition of the sublinear expectation $\mathbb{\hat{E}}$, the
following properties can be derived.

\begin{proposition}
\label{proE} For all $X, Y \in \mathcal{H}$, the sublinear expectation
$\mathbb{\hat{E}}$ satisfies:

\begin{description}
\item[(i)] $-\mathbb{\hat{E}}[Y - X] \leq \mathbb{\hat{E}}[X] - \mathbb{\hat
{E}}[Y] \leq \mathbb{\hat{E}}[X - Y]$;

\item[(ii)] $|\mathbb{\hat{E}}[X] - \mathbb{\hat{E}}[Y]| \leq \mathbb{\hat{E}%
}[X - Y] \vee \mathbb{\hat{E}}[Y - X]$;

\item[(iii)] $\mathbb{\hat{E}}[X]-\mathbb{\hat{E}}[-Y]\leq \mathbb{\hat{E}%
}[X+Y]$;

\item[(iv)] $|\mathbb{\hat{E}}[X+Y]|\leq|\mathbb{\hat{E}}[X]|+\mathbb{\hat{E}%
}[|Y|]$;

\item[(v)] If $\mathbb{\hat{E}}[X]=-\mathbb{\hat{E}}[-X]$, then $\mathbb{\hat
{E}}[X+Y]=\mathbb{\hat{E}}[X]+\mathbb{\hat{E}}[Y]$.
\end{description}
\end{proposition}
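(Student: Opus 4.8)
The plan is to derive all four properties from the definition using only monotonicity, constant preservation, and sub-additivity; positive homogeneity is not needed. The single algebraic device used throughout is to insert a zero in the form $X=(X-Y)+Y$ or $X=(X+Y)+(-Y)$ and then apply sub-additivity in the appropriate direction.

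For (i), I would write $\mathbb{\hat{E}}[X]=\mathbb{\hat{E}}[(X-Y)+Y]\le\mathbb{\hat{E}}[X-Y]+\mathbb{\hat{E}}[Y]$, which rearranges to the right-hand inequality $\mathbb{\hat{E}}[X]-\mathbb{\hat{E}}[Y]\le\mathbb{\hat{E}}[X-Y]$. Exchanging the roles of $X$ and $Y$ gives $\mathbb{\hat{E}}[Y]-\mathbb{\hat{E}}[X]\le\mathbb{\hat{E}}[Y-X]$, i.e. the left-hand inequality $-\mathbb{\hat{E}}[Y-X]\le\mathbb{\hat{E}}[X]-\mathbb{\hat{E}}[Y]$. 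Property (ii) is then immediate, since (i) bounds both $\mathbb{\hat{E}}[X]-\mathbb{\hat{E}}[Y]$ and its negative by $\mathbb{\hat{E}}[X-Y]\vee\mathbb{\hat{E}}[Y-X]$, hence so is their maximum, i.e. the absolute value. For (iii), the right-hand inequality is sub-additivity applied directly to $X+Y$, and the left-hand one follows from $\mathbb{\hat{E}}[X]=\mathbb{\hat{E}}[(X+Y)+(-Y)]\le\mathbb{\hat{E}}[X+Y]+\mathbb{\hat{E}}[-Y]$ after rearranging. Finally, for (iv), combining sub-additivity with monotonicity via $Y\le|Y|$ gives $\mathbb{\hat{E}}[X+Y]\le\mathbb{\hat{E}}[X]+\mathbb{\hat{E}}[Y]\le|\mathbb{\hat{E}}[X]|+\mathbb{\hat{E}}[|Y|]$; applying the left-hand inequality of (iii) together with $-Y\le|Y|$ and $-\mathbb{\hat{E}}[X]\le|\mathbb{\hat{E}}[X]|$ yields $-\mathbb{\hat{E}}[X+Y]\le\mathbb{\hat{E}}[-Y]-\mathbb{\hat{E}}[X]\le|\mathbb{\hat{E}}[X]|+\mathbb{\hat{E}}[|Y|]$, and the two bounds combine to give the claim.

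There is no real obstacle here: the statement is a routine consequence of the axioms. The only point requiring care is to keep the direction of sub-additivity and the signs straight when $-X$ or $-Y$ appears, bearing in mind that $\mathbb{\hat{E}}$ is genuinely nonlinear, so $\mathbb{\hat{E}}[-X]\neq-\mathbb{\hat{E}}[X]$ in general; this is precisely why the one-sided bounds in (i) and (iii) cannot be collapsed into equalities.
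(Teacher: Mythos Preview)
Your proof is correct and complete. The paper does not give an explicit proof of this proposition; it merely states that the properties ``can be derived'' from the definition of sublinear expectation, so your argument---using only sub-additivity and monotonicity via the decompositions $X=(X-Y)+Y$ and $X=(X+Y)+(-Y)$---is exactly the kind of routine verification the authors had in mind.
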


We say a random variable $X=(X_{1},\ldots,X_{d})$ on a sublinear expectation
space $(\Omega,\mathcal{H},\mathbb{\hat{E}})$ is $\mathbb{R}^{d}$-valued if
$X_{i}\in \mathcal{H}$ for $1\leq i\leq d$. Accordingly, the sublinear
expectation of such an $\mathbb{R}^{d}$-valued random variable is denoted by
$\mathbb{\hat{E}}[X]:=(\mathbb{\hat{E}}[X_{1}],\ldots,\mathbb{\hat{E}}[
X_{d}])^{\intercal}$, where $^{\intercal}$ denotes the transpose. The following definitions provide the basic framework
for the robust limit theorems.

\begin{definition}
Let $X_{1}$ and $X_{2}$ be two $\mathbb{R}^{d}$-valued random variables
defined on sublinear expectation spaces $(\Omega_{1}, \mathcal{H}_{1},
\mathbb{\hat{E}}_{1})$ and $(\Omega_{2}, \mathcal{H}_{2}, \mathbb{\hat{E}}%
_{2})$, respectively.

\begin{description}
\item[(i)] They are said to be \emph{identically distributed}, denoted by
$X_{1} \overset{d}{=} X_{2}$, if for all $\varphi \in C_{b,Lip}(\mathbb{R}%
^{d})$, the space of bounded and Lipschitz continuous functions,
\[
\mathbb{\hat{E}}_{1}[\varphi(X_{1})] = \mathbb{\hat{E}}_{2}[\varphi(X_{2})].
\]

\item[(ii)] In a sublinear expectation space $(\Omega, \mathcal{H},
\mathbb{\hat{E}})$, an $\mathbb{R}^{m}$-valued random variable $Y$ is said to
be \emph{independent} from an $\mathbb{R}^{d}$-valued random variable $X$
under $\mathbb{\hat{E}}$ if, for all $\varphi \in C_{b,Lip}(\mathbb{R}^{d+m}%
)$,
\[
\mathbb{\hat{E}}\left[  \varphi(X, Y) \right]  = \mathbb{\hat{E}}\left[
\mathbb{\hat{E}}\left[  \varphi(x, Y) \right]  _{x = X} \right]  ,
\]
denoted by $Y \perp \! \! \! \perp X$. If $Y \overset{d}{=} X$ and $Y \perp \!
\! \! \perp X$, we say $Y$ is an \emph{independent copy} of $X$.

A sequence $\{X_{i}\}_{i=1}^{\infty}$ of $\mathbb{R}^{d}$-valued random
variables is said to be \emph{i.i.d. under} $\mathbb{\hat{E}}$ if $X_{i+1}
\overset{d}{=} X_{i}$ and $X_{i+1}$ is independent from $\{X_{1}, \ldots,
X_{i}\}$ for all $i \in \mathbb{N}$.

\item[(iii)] A sequence $\{X_{i}\}_{i=1}^{\infty}$ of $\mathbb{R}^{d}$-valued
random variables converges \emph{in law/weakly} to a random variable $X$ under
$\mathbb{\hat{E}}$ if for all $\varphi \in C_{b,Lip}(\mathbb{R}^{d})$,
\[
\lim_{i \to \infty} \mathbb{\hat{E}}\left[  \varphi(X_{i}) \right]  =
\mathbb{\hat{E}}\left[  \varphi(X) \right]  .
\]

\end{description}
\end{definition}

We study the approximation of a nonlinear L\'{e}vy process under sublinear
expectations via the universal robust limit theorem first established in
\cite{HJLP2022}. We begin by recalling the notion of a nonlinear L\'{e}vy
process under sublinear expectations, as developed in \cite{HP2021, NN2017}.

\begin{definition}
A $d$-dimensional c\`adl\`ag process $(X_{t})_{t \geq0}$ defined on a
sublinear expectation space $(\Omega, \mathcal{H}, \mathbb{\hat{E}})$ is
called a {nonlinear L\'{e}vy process} if $X_{t}$ is an $\mathbb{R}^{d}$-valued
random variable for every $t\geq0$, and the following conditions hold:

\begin{description}
\item[(i)] $X_{0} = 0$;

\item[(ii)] $(X_{t})_{t \geq0}$ has stationary increments, i.e., $X_{t} -
X_{s} \overset{d}{=} X_{t-s}$ for all $0 \leq s \leq t$;

\item[(iii)] $(X_{t})_{t \geq0}$ has independent increments, i.e., $X_{t} -
X_{s}$ is independent from $(X_{t_{1}}, \ldots, X_{t_{n}})$ for all $0 \leq
t_{1} \leq \cdots \leq t_{n} \leq s \leq t$.
\end{description}
\end{definition}

In this paper, we are interested in a nonlinear L\'evy process $X_{t}=(\xi
_{t},\eta_{t},\zeta_{t})$, $t\in[0,1]$, with scaling properties
\begin{equation}
\label{scale_property}\xi_{t}=t\sqrt{\xi_{1}},\  \eta_{t}=t\eta_{1},\  \zeta
_{t}=\sqrt[\alpha]{t}\zeta_{1}%
\end{equation}
for $\alpha \in(1,2)$.

\subsection{Universal robust limit theorem}

Let us recall the universal robust limit theorem developed in \cite{HJLP2022},
which provides a way to construct a nonlinear L\'{e}vy process by an
approximation procedure. Let $\underline{\Lambda},\overline{\Lambda}>0$, and
let $F_{\mu}$ be the $\alpha$-stable L\'{e}vy measure on $(\mathbb{R}%
^{d},\mathcal{B}(\mathbb{R}^{d}))$, defined by
\[
F_{\mu}(B)=\int_{S}\mu(dx)\int_{0}^{\infty}\mathbbm{1}_{B}(rx)\frac
{dr}{r^{1+\alpha}},\quad \text{for }B\in \mathcal{B}(\mathbb{R}^{d}),
\]
where $\mu$ is a spectral measure, a finite measure on the unit sphere
$S=\{x\in \mathbb{R}^{d}:|x|=1\}$, which describes the distribution of
directions for a L\'{e}vy process. The term $\frac{1}{r^{1+\alpha}}$ in the
integral defines the $\alpha$-stable L\'{e}vy measure, which models the jump
behavior of a stable L\'{e}vy process, with $\alpha$ controlling the tail
heaviness. We define
\begin{equation}
\mathcal{L}_{0}=\left \{  F_{\mu}\  \text{measure on }\mathbb{R}^{d}:\mu
(S)\in(\underline{\Lambda},\overline{\Lambda})\right \}  , \label{L_0}%
\end{equation}
which is the set of stable L\'{e}vy measures $F_{\mu}$ such that $\mu
(S)\in(\underline{\Lambda},\overline{\Lambda})$. Let $\mathcal{L}%
\subset \mathcal{L}_{0}$ be a non-empty compact convex set. By the Sato-type
result in Section 4.2 of \cite{HJLP2022}, the following boundedness condition
holds
\[
\mathcal{K}:=\sup_{F_{\mu}\in \mathcal{L}}\int_{\mathbb{R}^{d}}|z|\wedge
|z|^{2}F_{\mu}(\mathrm{d}z)<\infty
\]
and the small jump condition is satisfied
\[
\lim_{\varepsilon \rightarrow0}\mathcal{K}_{\varepsilon}=0\quad \text{ for
}\quad \mathcal{K}_{\varepsilon}:=\sup_{F_{\mu}\in \mathcal{L}}\int
_{|z|\leq \varepsilon}|z|^{2}F_{\mu}(\mathrm{d}z).
\]
This is closely related to the validity of the comparison principle for the
PIDE \eqref{PIDE}, as presented in \cite{HP2021,NN2017}.

The following universal robust limit theorem has been established in Theorem
3.4 in \cite{HJLP2022}.

\begin{theorem}
\label{universal limit theorem} Let $\{(X_{i}, Y_{i}, Z_{i})\}_{i=1}^{\infty}$
be an i.i.d. sequence of $\mathbb{R}^{3d}$-valued random variables on a
sublinear expectation space $(\Omega, \mathcal{H}, \mathbb{\hat{E}})$. Suppose that

\begin{description}
\item[(i)] $\mathbb{\hat{E}}[X_{1}]=\mathbb{\hat{E}}[-X_{1}]=0$,
$\lim \limits_{\gamma \rightarrow+\infty}\mathbb{\hat{E}}[(|X_{1}|^{2}%
-\gamma)^{+}]=0$, and $\lim \limits_{\gamma \rightarrow+\infty}\mathbb{\hat{E}%
}[(|Y_{1}|-\gamma)^{+}]=0.$

\item[(ii)] $\mathbb{\hat{E}}[Z_{1}]=\mathbb{\hat{E}}[-Z_{1}]=0$ and $M_{\ast
}:=\sup \limits_{n}\mathbb{\hat{E}}[n^{-\frac{1}{\alpha}}|S_{n}^{3}|]<\infty$.

\item[(iii)] For each $\varphi \in C_{b}^{3}(\mathbb{R}^{d})$, the space of
functions on $\mathbb{R}^{d}$ with uniformly bounded derivatives up to the
order $3$, the following estimate holds
\[
\lim_{s\downarrow0}\frac{1}{s}\bigg \vert \mathbb{\hat{E}}\big[\varphi
(z+s^{\frac{1}{\alpha}}Z_{1})-\varphi(z)\big]-s\sup \limits_{F_{\mu}%
\in \mathcal{L}}\int_{\mathbb{R}^{d}}\delta_{\lambda}\varphi(z)F_{\mu}%
(d\lambda)\bigg \vert=0,\text{ uniformly on }z\in \mathbb{R}^{d}.
\]

\end{description}

Then, there exists a nonlinear L\'{e}vy process $(\xi_{t},\eta_{t},\zeta_{t}%
)$, $t\in \lbrack0,1]$, associated with an uncertainty set $\Theta
\subset \mathcal{L}\times \mathbb{R}^{d}\times \mathbb{S}_{+}(d)$ satisfying the
scaling property (\ref{scale_property}) and
\[
\sup_{(F_{\mu},q,Q)\in \Theta}\left \{  \int_{\mathbb{R}^{d}}|z|\wedge
|z|^{2}F_{\mu}(dz)+|q|+|Q|\right \}  <\infty,
\]
such that for any $\phi \in C_{b,Lip}(\mathbb{R}^{3d})$,
\[
\lim_{n\rightarrow \infty}\mathbb{\hat{E}}\left[  \phi \left(  \frac{S_{n}^{1}%
}{\sqrt{n}},\frac{S_{n}^{2}}{n},\frac{S_{n}^{3}}{\sqrt[\alpha]{n}}\right)
\right]  =\mathbb{\tilde{E}}[\phi(\xi_{1},\eta_{1},\zeta_{1})],
\]
where $\mathbb{\tilde{E}}[\cdot]$\ is the sublinear expectation under which
the nonlinear L\'{e}vy process $(\xi_{\cdot},\eta_{\cdot},\zeta_{\cdot})$~is
constructed. Moreover, the nonlinear L\'{e}vy process can be characterized as
the unique viscosity solution of (\ref{PIDE}):

\begin{description}
\item[(i)] For $(t,x,y,z)\in \lbrack0,1]\times \mathbb{R}^{3d}$,
$u(t,x,y,z)=\mathbb{\tilde{E}}[\phi(x+\xi_{t},y+\eta_{t},z+\zeta_{t})]$ is the
unique viscosity solution of (\ref{PIDE}) with a uniform bound $\left \Vert
\phi \right \Vert _{\infty}$, i.e., $\left \Vert u\right \Vert _{\infty}%
\leq \left \Vert \phi \right \Vert _{\infty}$;

\item[(ii)] $u(t,\cdot,\cdot,\cdot)$ is Lipschitz continuous with constant
$C_{\phi}$, the Lipschitz constant of $\phi$, and $u(\cdot,x,y,z)$ is
$1/2$-H\"{o}lder continuous such that for any $t,t^{\prime}\in \lbrack0,1]$ and
$(x,y,z),(x^{\prime},y^{\prime},z^{\prime})\in \mathbb{R}^{3d}$,
\begin{equation}
|u(t,x,y,z)-u(t^{\prime},x^{\prime},y^{\prime},z^{\prime})|\leq C_{0}\left(
|t-t^{\prime}|^{\frac{1}{2}}+|x-x^{\prime}|+|y-y^{\prime}|+|z-z^{\prime
}|\right)  , \label{u_regularity}%
\end{equation}
where $C_{0}:=C_{\phi}\left(  (M_{X_{1}}^{2})^{1/2}+M_{Y_{1}}^{1}+M_{\ast
}\right)  \vee C_{\phi}$ with $M_{\xi}^{p}:=\mathbb{\hat{E}[}|\xi|^{p}]$.
\end{description}
\end{theorem}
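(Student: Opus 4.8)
The plan is to identify the generator $G$ and its uncertainty set $\Theta$ from the one–step behaviour of the i.i.d.\ sequence, to hand the existence, uniqueness, Feynman--Kac representation and regularity of $u$ over to the PIDE theory of \cite{NN2017}, and finally to upgrade the identification of the limit into an actual convergence statement. First I would read off $\Theta$: for $\psi\in C_b^3(\mathbb{R}^{3d})$ and small $s>0$, the independence property together with a Taylor expansion gives
\[
\frac{1}{s}\Big(\mathbb{\hat E}\big[\psi\big(x+s^{1/2}X_1,\,y+sY_1,\,z+s^{1/\alpha}Z_1\big)\big]-\psi(x,y,z)\Big)\longrightarrow G\big(D_y\psi,\,D_x^2\psi,\,\psi(x,y,z+\cdot)\big)\quad(s\downarrow0),
\]
where the moment conditions in (i)--(ii) (the two–sided zero means $\mathbb{\hat E}[X_1]=\mathbb{\hat E}[-X_1]=0$ and $\mathbb{\hat E}[Z_1]=\mathbb{\hat E}[-Z_1]=0$, the uniform integrability of $|X_1|^2$ and $|Y_1|$, and the bound $M_Z$) force the first–order $X_1$– and $Z_1$–terms and all mixed/remainder contributions to vanish in the limit, using $\tfrac12+\tfrac1\alpha>1$; the surviving part then consists of a drift term governed by $p\mapsto\mathbb{\hat E}[\langle p,Y_1\rangle]$, a diffusion term governed by $A\mapsto\tfrac12\mathbb{\hat E}[\langle AX_1,X_1\rangle]$, and, via (iii), a jump term $\varphi\mapsto\sup_{F_\mu\in\mathcal{L}}\int_{\mathbb{R}^d}\delta_\lambda\varphi(z)F_\mu(d\lambda)$. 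Taking the closed convex hull of the triples $(F_\mu,q,Q)$ that arise produces $\Theta\subset\mathcal{L}\times\mathbb{R}^d\times\mathbb{S}_+(d)$ for which this limit is exactly $G$ of \eqref{functionG}; its boundedness is the Sato-type bound recalled before the theorem together with the finiteness of $M_{X_1}^2$, $M_{Y_1}^1$, $M_Z$.

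With $\Theta$ in hand, since $\mathcal{L}$ is nonempty, compact and convex and the small-jump condition $\mathcal{K}_\varepsilon\to0$ holds, the hypotheses of \cite{NN2017} (see also \cite{HP2021,DKN2020}) apply, giving a unique bounded viscosity solution $u$ of \eqref{PIDE} with $\|u\|_\infty\le\|\phi\|_\infty$ and the representation $u(t,x,y,z)=\mathbb{\tilde E}[\phi(x+\xi_t,y+\eta_t,z+\zeta_t)]$ for a nonlinear L\'evy process $(\xi_t,\eta_t,\zeta_t)$. The scaling property \eqref{scale_property} follows from uniqueness, since under the rescalings $s^{1/2},s,s^{1/\alpha}$ the function $u$ solves the same equation after the induced change of variables. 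Spatial Lipschitz continuity with constant $C_\phi$ is inherited from $\phi$ by translation covariance of the law, and the $1/2$–H\"older bound in time follows from the increment estimates $\mathbb{\tilde E}|\xi_t|\le C_\phi(M_{X_1}^2)^{1/2}\sqrt t$, $\mathbb{\tilde E}|\eta_t|\le C_\phi M_{Y_1}^1\,t$, $\mathbb{\tilde E}|\zeta_t|\le C_\phi M_Z\sqrt[\alpha]{t}$ obtained from the scaling relations and $\phi\in C_{b,Lip}$; since $t\in[0,1]$ and $1/\alpha\ge1/2$, each of these is at most $C_0\sqrt t$, which gives \eqref{u_regularity}.

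It remains to prove the convergence. Using the tower property of $\mathbb{\hat E}$ under independence one checks by induction — this is the iteration of \cite{HL2020} — that the scheme \eqref{approx_scheme} satisfies $u_{1/n}(1,0,0,0)=\mathbb{\hat E}[\phi(S_n^1/\sqrt n,S_n^2/n,S_n^3/\sqrt[\alpha]n)]$, so it suffices to show $u_{1/n}(1,0,0,0)\to u(1,0,0,0)$. I would argue by weak convergence: the moment conditions yield relative compactness of the sublinear laws of $(S_n^1/\sqrt n,S_n^2/n,S_n^3/\sqrt[\alpha]n)$, and conditions (i)--(iii) identify every accumulation point as the time–$1$ marginal of a nonlinear L\'evy process with generator $G$; uniqueness of the viscosity solution of \eqref{PIDE} then forces the whole sequence to converge to $\mathbb{\tilde E}[\phi(\xi_1,\eta_1,\zeta_1)]$. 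Alternatively one verifies directly that \eqref{approx_scheme} is monotone, stable and consistent and invokes a Barles--Souganidis argument. The main obstacle is that $\Theta$ need not be a Cartesian product, so the drift, diffusion and jump uncertainties are coupled: the expansion above and the identification of the limit must be carried out jointly, \eqref{PIDE} cannot be split into the separate central-limit, law-of-large-numbers and $\alpha$-stable equations of \cite{P2019,BM2016}, and the degeneracy of $Q$ together with the singularity of $F_\mu$ at the origin is precisely what makes the general PIDE theory of \cite{NN2017}, rather than interior regularity of the limit equation, indispensable.
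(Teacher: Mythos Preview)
The present paper does not prove this theorem: immediately before the statement it says ``The following universal robust limit theorem has been established in Theorem~3.4 in \cite{HJLP2022}'', and no proof appears here. So there is no ``paper's own proof'' to compare against; the result is imported wholesale from \cite{HJLP2022}, where (as the introduction recalls) it is obtained by weak-convergence methods.

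Your sketch is broadly in the spirit of what \cite{HJLP2022} does --- identify the generator from the short-time asymptotics, invoke \cite{NN2017} for the PIDE/nonlinear L\'evy side, and prove convergence via tightness plus uniqueness of the viscosity solution --- but there is one genuine inconsistency. In the first paragraph you compute the $s\downarrow0$ limit as three \emph{separate} sublinear functionals $p\mapsto\mathbb{\hat E}[\langle p,Y_1\rangle]$, $A\mapsto\tfrac12\mathbb{\hat E}[\langle AX_1,X_1\rangle]$ and the jump term from (iii), and then build $\Theta$ from ``the triples that arise''. That procedure yields a Cartesian product $\Theta$, which contradicts the very point you stress at the end (and which the paper emphasizes): $X_1,Y_1,Z_1$ are not assumed independent, so the one-step expansion must be done \emph{jointly}, producing a single sublinear functional $(p,A,\varphi)\mapsto\lim_{s\downarrow0}\tfrac1s\big(\mathbb{\hat E}[\cdots]-\psi\big)$ whose representing set $\Theta$ may couple $(F_\mu,q,Q)$. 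Hypothesis~(iii) constrains only the $Z_1$-marginal; the existence of a suitable $\Theta$ realizing $G$ in \eqref{functionG} is a nonlinear L\'evy--Khintchine statement that requires a separate argument, not just the closed convex hull of three independently-obtained sets. A minor slip: your increment bounds $\mathbb{\tilde E}|\xi_t|\le C_\phi(M_{X_1}^2)^{1/2}\sqrt t$ etc.\ should not carry the factor $C_\phi$ (that constant enters only when you apply the Lipschitz bound of $\phi$ afterwards).
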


Recall a bounded upper semicontinuous (resp. lower semicontinuous) function
$u$ on $[0,1]\times \mathbb{R}^{3d}$ is called a viscosity subsolution (resp.
viscosity supersolution) of (\ref{PIDE}) if $u(0,\cdot,\cdot,\cdot)\leq
\phi(\cdot,\cdot,\cdot)$ $($resp. $\geq \phi(\cdot,\cdot,\cdot))$ and for each
$(t,x,y,z)\in(0,1]\times \mathbb{R}^{3d}$,
\[
\partial_{t}\psi(t,x,y,z)-G\left(  D_{y}\psi(t,x,y,z),D_{x}^{2}\psi
(t,x,y,z),\psi(t,x,y,z+\cdot)\right)  \leq0\text{ }(\text{resp. }\geq0)
\]
whenever $\psi \in C_{b}^{2,3}((0,1]\times \mathbb{R}^{3d})$ is such that
$\psi \geq u$ (resp. $\psi \leq u$) and $\psi(t,x,y,z)=u(t,x,y,z)$. A bounded
continuous function $u$ is a viscosity solution of (\ref{PIDE}) if it is both
a viscosity subsolution and supersolution.

\subsection{Regularization\label{mollification}}

We end this section with Krylov's regularization result, which will be used
frequently throughout this work as a technical tool for handling the viscosity
solution $u$ of~\eqref{PIDE} and the piecewise constant approximation $u_{h}$
in \eqref{approx_scheme}. The symbol~$|\cdot|$ denotes the standard Euclidean
norm, applicable to any $\mathbb{R}^{d}$ type space (including the space of
$d\times l$ matrices). Specifically, for a matrix~$A\in \mathbb{S}(d)$, the
squared norm~$|A|^{2}$~is equivalent to~$tr[AA^{\intercal}]$.

For every $f\in C_{b}^{\infty}(\mathbb{R}^{3d})$, the space of bounded
infinitely differentiable functions on $\mathbb{R}^{3d}$, and multi-index
$\gamma$ we define
\[
D^{\gamma}f:=\frac{\partial^{|\gamma|}f}{\partial \chi_{1}^{\gamma_{1}}%
\cdots \partial \chi_{3d}^{\gamma_{3d}}},
\]
where the order is given by $|\gamma|=\sum_{i=1}^{3d}\gamma_{i}$. For any
$l\in \mathbb{N}$, we write
\[
D^{l}f:=\big(D^{\gamma}f\big)_{|\gamma|=l},\quad \text{and}\quad \Vert
D^{l}f\Vert_{\infty}:=\left \Vert \left(  \sum_{|\gamma|=l}|D^{\gamma}%
f|^{2}\right)  ^{1/2}\right \Vert _{\infty},
\]
which denote, respectively, the family of all partial derivatives of order
$l$, and its corresponding supremum norm. In particular, we write $Df:=D^{1}f$
for the gradient and $D^{2}f$ for the Hessian.

Assume that $v:[0,1]\times\mathbb{R}^{3d}\to\mathbb{R}$ is bounded and
H\"older--Lipschitz continuous, namely there exists a constant $C>0$ such that
for all $t,t'\in[0,1]$ and all $\chi,\chi'\in\mathbb{R}^{3d}$,
\[
|v(t,\chi)-v(t',\chi')|\le C\big(|t-t'|^{1/2}+|\chi-\chi'|\big).
\]

Fix $\varepsilon>0$. We extend $v$ from $[0,1]\times\mathbb{R}^{3d}$ to
$[0,1+\varepsilon^2]\times\mathbb{R}^{3d}$ by setting
$v(t,\chi):=v(1,\chi)$ for $t\in[1,1+\varepsilon^2]$ (and keeping the same
notation for the extension).
Let $\rho\in C_c^\infty(\mathbb{R}\times\mathbb{R}^{3d})$ be a nonnegative
mollifier with unit integral and support contained in $[-1,0]\times B_1$.
Define the scaled mollifier
\[
\rho_\varepsilon(\tau,e):=\varepsilon^{-(2+3d)}\,\rho(\tau/\varepsilon^2,e/\varepsilon),
\]
and the mollified function
\[
v^\varepsilon(t,\chi):=(v*\rho_\varepsilon)(t,\chi)
:=\int_{-\varepsilon^2}^{0}\int_{|e|<\varepsilon}
v(t-\tau,\chi-e)\,\rho_\varepsilon(\tau,e)\,de\,d\tau.
\]
Then $v^\varepsilon\in C_b^\infty([0,1]\times\mathbb{R}^{3d})$.

Moreover, using the regularity of $v$ and that $\rho_\varepsilon$ has unit
mass, we obtain
\[
|v(t,\chi)-v^\varepsilon(t,\chi)|
\le \int_{-\varepsilon^2}^{0}\int_{|e|<\varepsilon}
|v(t,\chi)-v(t-\tau,\chi-e)|\,\rho_\varepsilon(\tau,e)\,de\,d\tau
\le C\big(\varepsilon+\varepsilon\big)=2C\varepsilon,
\]
hence
\begin{equation}\label{v_varp_estimate_1}
\|v-v^\varepsilon\|_\infty\le 2C\varepsilon.
\end{equation}
For the derivatives, standard mollification estimates (see Lemma 3.5 in \cite{Krylov1999} or Lemma 2.10 in \cite{BJKL2023}) yield for all
$i,j\in\mathbb{N}$,
\begin{equation}\label{v_varp_estimate_2}
\|\partial_t^i D^j v^\varepsilon\|_\infty
\le C\,\varepsilon^{1-2i-j}\,\|\partial_t^i D^j \rho\|_{1},
\end{equation}
where
\[
\|\partial_t^i D^j \rho\|_{1}
:=\max_{|\gamma|=j}\int_{-1}^{0}\int_{|e|<1}
\big|\partial_t^i D^\gamma\rho(\tau,e)\big|\,de\,d\tau<\infty.
\]
In particular, for notational convenience we set
\[
K_\rho:=\max_{i+j\le 3}\|\partial_t^i D^j \rho\|_{1},
\]
so that
\[
\|\partial_t^i D^j v^\varepsilon\|_\infty\le C\,K_\rho\,\varepsilon^{1-2i-j},
\qquad i,j\in\mathbb{N},\ i+j\le 3.
\]

\subsection{Main results}

The main result of this article is to establish error bounds between the
numerical solution $u_{h}$ in~\eqref{approx_scheme} and the viscosity solution
$u$ in~\eqref{PIDE}, by introducing a set of strengthened assumptions relative
to conditions~(i)-(iii) in Theorem~\ref{universal limit theorem}.

\begin{assumption}\label{assump2}
We assume that the following conditions hold:
\begin{description}

\item[(i)] $\mathbb{\hat{E}}[X_{1}]=\mathbb{\hat{E}}[-X_{1}]=0$,
$\mathbb{\hat{E}[}|X_{1}|^{3}]=M_{X_{1}}^{3}<\infty$, and $\mathbb{\hat{E}%
[}|Y_{1}|^{2}]=M_{Y_{1}}^{2}<\infty$.

\item[(ii)] $\mathbb{\hat{E}}[Z_{1}]=\mathbb{\hat{E}}[-Z_{1}]=0$, $\sup
_{n\geq1}\mathbb{\hat{E}}[n^{-\frac{1}{\alpha}}|S_{n}^{3}|]=M_{\ast}<\infty$,
and $\mathbb{\hat{E}[}|Z_{1}|^{\delta}]=M_{Z_{1}}^{\delta}<\infty$ for some
$\delta \in(\frac{3}{4}\alpha \vee1,\alpha)$.

\item[(iii)] For each $(p,A)\in \mathbb{R}^{d}\times \mathbb{S}(d)$ and
$\varphi \in C_{b}^{3}(\mathbb{R}^{d})$, the set of functions with uniformly
bounded derivatives up to the order $3$, we have the estimate
\[
\frac{1}{s}\bigg \vert \mathbb{\hat{E}}\Big[\varphi(z+s^{\frac{1}{\alpha}%
}Z_{1})-\varphi(z)+\langle p,sY_{1}\rangle+\frac{1}{2}\langle As^{\frac{1}{2}%
}X_{1},s^{\frac{1}{2}}X_{1}\rangle \Big]-sG(p,A,\varphi(z+\cdot
))\bigg \vert \leq \hat{l}(s,L_{0}),
\]
where $\hat{l}$ is a function dependent on $s\in \lbrack0,1]$ and
$L_{0}:=\left \vert p\right \vert +\left \vert A\right \vert +\Vert \varphi
\Vert_{\infty}+\sum_{i=1}^{3}\Vert D^{i}\varphi \Vert_{\infty}$. Moreover, for
each given constant $C>0$, $\hat{l}(s,L_{0})\rightarrow0$ as $s\rightarrow0$,
uniformly on $z\in \mathbb{R}^{d}$ and $L_{0}\leq C$.
\end{description}
\end{assumption}

\begin{remark}
We remark that Assumptions~\ref{assump2} (i)--(iii) constitute sufficient
conditions for Theorem~\ref{universal limit theorem}. Assumption \ref{assump2}
(i) is standard moment conditions required to establish convergence rates in
robust limit theorems. These typically demand higher moment integrability than
assumption (i) imposed in Theorem \ref{universal limit theorem}, owing to the
fact that, for any $\gamma>0$,
\[
\mathbb{\hat{E}}\left[  (|X_{1}|^{2}-\gamma)^{+}\right]  \leq \mathbb{\hat{E}%
}\left[  |X_{1}|^{2}\mathbf{1}_{\{|X_{1}|^{2}\geq \gamma \}}\right]  \leq
\frac{1}{\sqrt{\gamma}}\mathbb{\hat{E}}\left[  |X_{1}|^{3}\right]  .
\]
A similar inequality holds for $Y_{1}$. Note also that these moment conditions
imply the weak convergence of the sequences $n^{-1/2}S_{n}^{1}$ and
$n^{-1}S_{n}^{2}$ (see \eqref{XY_weak_convergence} below).

In Assumption \ref{assump2} (ii), the finiteness of $M_{\ast}$ ensures the
weak convergence of the sequence $n^{-1/\alpha}S_{n}^{3}$, while the
finiteness of $M_{Z_{1}}^{\delta}$ provides a quantitative bound for this
convergence and allows us to derive a convergence rate. In contrast to the
cases of $X_{1}$ and $Y_{1}$, the finiteness of $M_{\ast}$ and $M_{Z_{1}%
}^{\delta}$ do not imply each other, as the boundedness of scaled sums does
not imply finite fractional moments, and vice versa. The range of $\delta$ in
Assumption \ref{assump2} (ii) is due to technical reasons. See Remark
\ref{remark_delta} for further discussion.

Finally, assumption (iii) in Theorem \ref{universal limit theorem} represents
a form of spatial consistency for $Z_{1}$ and is crucial for ensuring the
existence of the uncertainty set $\Theta$ in the universal robust limit
theorem, knows as the L\'{e}vy--Khintchine representation theorem under
sublinear expectations. It ensures the existence of the function $G$, which
depends on $\Theta$. Assumption \ref{assump2} (iii) ensures a quantitative
L\'{e}vy--Khintchine-type representation, controlling the approximation of
$(X_{1},Y_{1},Z_{1})$ to the nonlinear L\'{e}vy processes characterized by
$\Theta$. As $X_{1}$, $Y_{1}$, and $Z_{1}$ are not assumed independent, the
uncertainty set $\Theta$ may not be a Cartesian product and may involve
coupling constraints between their elements. Also, to obtain the convergence
rate, Assumption \ref{assump2} (iii) further characterizes the uniform
convergence of the approximation error.
\end{remark}

We will establish the convergence rates of $u_{h} $ to $u $ within the
monotone scheme analysis framework. To this end, we first rewrite
(\ref{approx_scheme}) as the following monotone approximation scheme:
\begin{equation}
\left \{
\begin{array}
[c]{l}%
S(h,x,y,z,u_{h}(t,x,y,z),u_{h}(t-h,\cdot,\cdot,\cdot)) = 0, \quad(t,x,y,z)
\in[h,1] \times \mathbb{R}^{3d},\\
u_{h}(t,x,y,z) = \phi(x,y,z), \quad(t,x,y,z) \in[0,h) \times \mathbb{R}^{3d},
\end{array}
\right.  \label{3}%
\end{equation}
where $S:(0,1) \times \mathbb{R}^{3d} \times \mathbb{R} \times C_{b}%
(\mathbb{R}^{3d}) \rightarrow \mathbb{R} $ is defined by
\[
S(h,x,y,z,p,v) = \frac{1}{h} \left(  p - \mathbb{\hat{E}}[v(x + h^{1/2} X, y +
h Y, z + h^{1/\alpha} Z)]\right)  .
\]

Under Assumption \ref{assump2}, the approximation scheme (\ref{3}) satisfies
the following properties. The detailed proofs will be provided in Section
\ref{scheme property}.

\begin{description}
\item[(C1)] \textbf{Lipschitz continuity:} For every $h\in(0,1)$,
$t,t^{\prime}\in \lbrack0,1]$, and $(x,y,z),(x^{\prime},y^{\prime},z^{\prime
})\in \mathbb{R}^{3d}$,
\[
|u_{h}(t,x,y,z)-u_{h}(t^{\prime},x^{\prime},y^{\prime},z^{\prime})|\leq
C_{0}\left(  |t-t^{\prime}|^{1/2}+h^{1/2}+|x-x^{\prime}|+|y-y^{\prime
}|+|z-z^{\prime}|\right)  ,
\]
where $C_{0}=C_{\phi}\left(  (M_{X_{1}}^{2})^{1/2}+M_{Y_{1}}^{1}+M_{\ast
}\right)  \vee C_{\phi}$.

\item[(C2)] \textbf{Subsolution property:} There exists a function
$E_{1}:\mathbb{R}_{+}^{5}\rightarrow \mathbb{R}_{+}$ such that
\[
S\left(  h,x,y,z,u_{h}^{\varepsilon}(t,x,y,z),u_{h}^{\varepsilon}%
(t-h,\cdot,\cdot,\cdot)\right)  \leq \partial_{t}u_{h}^{\varepsilon}-G\left(
D_{y}u_{h}^{\varepsilon},D_{x}^{2}u_{h}^{\varepsilon},u_{h}^{\varepsilon
}(t,x,y,z+\cdot)\right)  +E_{1}(\varepsilon,h,\Vert \phi \Vert_{\infty}%
,C_{0},K_{\rho}),
\]
for all $\varepsilon \in(0,1)$, $h\in(0,1)$, and $(t,x,y,z)\in \lbrack
h,1]\times \mathbb{R}^{3d}$, where $u_{h}^{\varepsilon}:=u_{h}\ast
\rho_{\varepsilon}$.

\item[(C3)] \textbf{Supersolution property:} There exists a function
$E_{2}:\mathbb{R}_{+}^{5}\rightarrow \mathbb{R}_{+}$ such that
\[
S\left(  h,x,y,z,u^{\varepsilon}(t,x,y,z),u^{\varepsilon}(t-h,\cdot
,\cdot,\cdot)\right)  \geq \partial_{t}u^{\varepsilon}-G\left(  D_{y}
u^{\varepsilon},D_{x}^{2}u^{\varepsilon},u^{\varepsilon}(t,x,y,z+\cdot
)\right) -E_{2}(\varepsilon,h,\Vert \phi \Vert_{\infty},C_{0},K_{\rho}),
\]
for all $\varepsilon \in(0,1)$, $h\in(0,1)$, and $(t,x,y,z)\in \lbrack
h,1]\times \mathbb{R}^{3d}$, where $u^{\varepsilon}:=u\ast \rho_{\varepsilon}$.
\end{description}

The following is the main result of this paper.

\begin{theorem}
\label{main theorem}Suppose that Assumption \ref{assump2} holds and $\phi \in
C_{b,Lip}(\mathbb{R}^{3d})$. Let $u$ denote the viscosity solution of
(\ref{PIDE}), and $u_{h}$ the solution of (\ref{approx_scheme}). Then, for
sufficiently small $h>0$, we have

\begin{description}
\item[(i) Lower bound:]
\[
u-u_{h}\leq4C_{0}h^{1/2}+\min_{\varepsilon>0}\left(  8C_{0}\varepsilon
+E_{1}(\varepsilon,h,\Vert \phi \Vert_{\infty},C_{0},K_{\rho})\right)
\quad \text{in }[0,1]\times \mathbb{R}^{3d},
\]

\item[(ii) Upper bound:]
\[
u_{h}-u\leq2C_{0}h^{1/2}+\min_{\varepsilon>0}\left(  8C_{0}\varepsilon
+E_{2}(\varepsilon,h,\Vert \phi \Vert_{\infty},C_{0},K_{\rho})\right)
\quad \text{in }[0,1]\times \mathbb{R}^{3d}.
\]

\end{description}
\end{theorem}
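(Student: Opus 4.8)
The plan is a two‑sided Krylov‑type regularization and comparison argument resting on the scheme properties (C1)--(C3), on the comparison principle for the PIDE~(\ref{PIDE}) (available because its viscosity solution is unique by \cite{NN2017}), and on an elementary comparison principle for the scheme~(\ref{3}): if two bounded functions satisfy $S\le 0$, resp.\ $S\ge 0$, on $[h,1]\times\mathbb{R}^{3d}$ and are ordered on $[0,h)\times\mathbb{R}^{3d}$, then they remain ordered on $[0,1]\times\mathbb{R}^{3d}$; this follows by induction over the steps $h,2h,\dots$ from monotonicity of $\mathbb{\hat{E}}$ together with $\mathbb{\hat{E}}[v+c]=\mathbb{\hat{E}}[v]+c$. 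I would prove the two bounds separately; in each case everything reduces to comparing $u$ with $u_h$ after mollifying with $\zeta_\varepsilon$, and the mismatch on the initial layer $[0,h)$---where $u_h\equiv\phi$ while $u$ has already moved by at most $C_0h^{1/2}$ by~(\ref{u_regularity}) and $u(0,\cdot)=\phi$---is exactly what produces the $h^{1/2}$-terms.

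\emph{Upper bound.} Here I would regularize the solution $u$. Since $G$ in~(\ref{functionG}) is a supremum of maps affine in $(p,A,\varphi)$ it is convex, and~(\ref{PIDE}) is translation invariant in $(t,x,y,z)$; hence every translate $u(t-\tau,\cdot-e,\cdot-e,\cdot-e)$ is again a viscosity supersolution, and---using that the nonlocal term is linear so that $G(\int\cdot\,\mathrm{d}\nu)\le\int G(\cdot)\,\mathrm{d}\nu$ holds, and that a smooth viscosity supersolution of a PIDE of this type is a classical one---the average $u^\varepsilon=u\ast\zeta_\varepsilon$ (well defined once $u$ is continued slightly past $t=1$ via its Feynman--Kac representation) satisfies $\partial_t u^\varepsilon-G(D_y u^\varepsilon,D_x^2u^\varepsilon,u^\varepsilon(t,x,y,z+\cdot))\ge 0$ on $(0,1]\times\mathbb{R}^{3d}$. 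Feeding this into (C3) gives $S(h,x,y,z,u^\varepsilon(t),u^\varepsilon(t-h))\ge-E_2(\varepsilon,h)$ on $[h,1]\times\mathbb{R}^{3d}$, so $w:=u^\varepsilon+E_2(\varepsilon,h)\,t+C_0h^{1/2}+2C_0\varepsilon$ is an exact scheme‑supersolution on $[h,1]$ (using $S(h,x,y,z,p+c,v+c)=S(h,x,y,z,p,v)$) and dominates $u_h\equiv\phi$ on $[0,h)$ (using $\|u-u^\varepsilon\|_\infty\le 2C_0\varepsilon$ and~(\ref{u_regularity})). The scheme comparison principle then yields $u_h\le w$, whence $u_h-u\le C_0h^{1/2}+4C_0\varepsilon+E_2(\varepsilon,h)$, and minimizing over $\varepsilon>0$ gives (ii).

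\emph{Lower bound.} Here I would regularize $u_h$ instead. The crucial observation is that $u_h^\varepsilon=u_h\ast\zeta_\varepsilon$ is a scheme‑supersolution automatically: inserting the recursion $u_h(s,\cdot)=\mathbb{\hat{E}}[u_h(s-h,\cdot+h^{1/2}X,\cdot+hY,\cdot+h^{1/\alpha}Z)]$ into the mollification integral and then using the sublinear Fubini/Jensen inequality $\mathbb{\hat{E}}[\int g_a\,\mathrm{d}\nu(a)]\le\int\mathbb{\hat{E}}[g_a]\,\mathrm{d}\nu(a)$ for probability measures $\nu$ (here $\mathrm{d}\nu(a)=\zeta_\varepsilon(\tau,e)\,\mathrm{d}e\,\mathrm{d}\tau$; legitimate by sub‑additivity and positive homogeneity of $\mathbb{\hat{E}}$, extended to integrals by approximation) gives $u_h^\varepsilon(t,x,y,z)\ge\mathbb{\hat{E}}[u_h^\varepsilon(t-h,x+h^{1/2}X,y+hY,z+h^{1/\alpha}Z)]$, i.e.\ $S(h,x,y,z,u_h^\varepsilon(t),u_h^\varepsilon(t-h))\ge 0$ on $[h,1]$ (after continuing $u_h$ past $t=1$ by the same recursion). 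Combined with (C2) this gives $\partial_t u_h^\varepsilon-G(D_y u_h^\varepsilon,D_x^2u_h^\varepsilon,u_h^\varepsilon(t,x,y,z+\cdot))\ge-E_1(\varepsilon,h)$ on $(h,1]$, so $u_h^\varepsilon+E_1(\varepsilon,h)\,t+c$ with $c:=2C_0h^{1/2}+2C_0\varepsilon$ is a classical supersolution of~(\ref{PIDE}) on $(h,1]$ (adding a function of $t$ alone affects neither $D_y$, $D_x^2$ nor the $\delta_\lambda$-term). Since $|u_h(h,\cdot)-\phi|\le C_0h^{1/2}$ for small $h$ (bound $\mathbb{\hat{E}}[\phi(\cdot+h^{1/2}X,\cdot+hY,\cdot+h^{1/\alpha}Z)]-\phi$ by the Lipschitz constant, the $h^{1/2}$-term dominating since $1/\alpha>1/2$) and $\|u_h-u_h^\varepsilon\|_\infty\le 2C_0\varepsilon$, we get $u(h,\cdot)\le\phi+C_0h^{1/2}\le u_h^\varepsilon(h,\cdot)+c$, so the comparison principle for~(\ref{PIDE}) on $[h,1]$ yields $u\le u_h^\varepsilon+E_1(\varepsilon,h)\,t+c$ there; together with the trivial bound $u-u_h\le C_0h^{1/2}$ on $[0,h)$ this gives $u-u_h\le 2C_0h^{1/2}+4C_0\varepsilon+E_1(\varepsilon,h)$ on $[0,1]\times\mathbb{R}^{3d}$, and minimizing over $\varepsilon>0$ gives (i).

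I expect the delicate points to be the two ``inheritance'' steps rather than the comparison bookkeeping: (a) that $u^\varepsilon$ really stays a viscosity supersolution of~(\ref{PIDE})---this uses convexity of $G$, translation invariance of~(\ref{PIDE}), linearity of the nonlocal term (for the Jensen step), and the fact that smooth viscosity supersolutions are classical for this class of PIDEs; and (b) the interchange of $\mathbb{\hat{E}}$ with the mollification integral in the lower‑bound argument, which needs the sublinear Fubini inequality above. A secondary nuisance is that the mollifier in~(\ref{v_varp}) samples times in $[t,t+\varepsilon^2]$, forcing the harmless continuation of $u$ and $u_h$ slightly beyond $t=1$, and one must keep $\varepsilon$ small relative to $h$ when matching the initial layer; since the final bounds are minimized over $\varepsilon>0$ and $h$ is taken small, nothing is lost. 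The genuinely hard inputs---the explicit forms of $E_1$ and $E_2$---are not part of this theorem: they come from (C2)--(C3), established in Section~\ref{scheme property} via Assumption~\ref{assump2}(vi) and the estimates~(\ref{v_varp_estimate}).
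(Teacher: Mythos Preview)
Your proposal is correct and follows essentially the same approach as the paper: for the upper bound you mollify $u$, use convexity of $G$ and stability of supersolutions to get $u^\varepsilon$ as a PIDE supersolution, feed it through the consistency estimate to obtain a scheme supersolution, and apply the scheme comparison principle; for the lower bound you mollify $u_h$, use the sublinear Fubini inequality to get $u_h^\varepsilon$ as a scheme supersolution, feed it through consistency to obtain a PIDE supersolution, and apply the PIDE comparison principle---exactly as in the paper. Your identification of the two delicate inheritance steps (convex‑combination stability of PIDE supersolutions, and the interchange of $\mathbb{\hat{E}}$ with the mollification integral) matches precisely what the paper relies on, and your treatment of the initial layer $[0,h)$ reproduces the paper's Lemma~\ref{first interval}.
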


\begin{remark}
The error bounds $E_{1} $ and $E_{2} $ are decreasing in $h $, but not in
$\varepsilon$. In fact, they diverge as $\varepsilon \to0 $, even though small
$\varepsilon$ is required to control the mollification error. Therefore,
achieving the optimal convergence rate requires balancing these effects by
minimizing the total error over $\varepsilon$, given $h $. The explicit forms
of $E_{1} $ and $E_{2} $ will be presented in the proof of
Theorem~\ref{main theorem}.
\end{remark}

\subsection{Convergence rate of the universal robust limit theorem}

A direct application of Theorem \ref{main theorem} is to derive error bounds
for the universal robust limit theorem. Specifically, Theorem
\ref{universal limit theorem} shows that for any $(t,x,y,z) \in[0,1]
\times \mathbb{R}^{3d} $,
\[
u(t,x,y,z) = \mathbb{\tilde{E}}\big[\phi(x + \xi_{t}, y + \eta_{t}, z +
\zeta_{t})\big].
\]
On the other hand, applying the iterative procedure developed in \cite{HL2020}
to the approximation scheme (\ref{approx_scheme}), we obtain that for any $k
\in \mathbb{N} $ such that $kh \leq1 $ and for any $(x,y,z) \in \mathbb{R}^{3d}
$,
\[
u_{h}(kh,x,y,z) = \mathbb{\hat{E}}\left[  \phi \left(  x + h^{1/2} \sum
_{i=1}^{k} X_{i},\ y + h \sum_{i=1}^{k} Y_{i},\ z + h^{1/\alpha} \sum
_{i=1}^{k} Z_{i}\right)  \right]  .
\]
Setting $h = \frac{1}{n} $ and $(t,x,y,z) = (1,0,0,0) $, it follows from
Theorem \ref{main theorem} that
\[
u_{\frac1n}(1,0,0,0) = \mathbb{\hat{E}}\left[  \phi \left(  \frac{S_{n}^{1}%
}{\sqrt{n}}, \frac{S_{n}^{2}}{n}, \frac{S_{n}^{3}}{n^{1/\alpha}} \right)
\right]  \rightarrow u(1,0,0,0) = \mathbb{\tilde{E}}\left[  \phi(\xi_{1},
\eta_{1}, \zeta_{1}) \right]  .
\]

\begin{theorem}
[Convergence rate of the universal robust limit theorem]%
\label{main theorem CLT} Suppose that Assumption \ref{assump2} holds. Then for
any $\phi \in C_{b,\text{Lip}}(\mathbb{R}^{3d})$,
\begin{align*}
&  -4C_{0}n^{-1/2}-\min_{\varepsilon>0}\left(  8C_{0}\varepsilon
+E_{1}(\varepsilon,n^{-1},\Vert \phi \Vert_{\infty},C_{0},K_{\rho})\right)  \\
&  \leq \mathbb{\hat{E}}\left[  \phi \left(  \frac{S_{n}^{1}}{\sqrt{n}}%
,\frac{S_{n}^{2}}{n},\frac{S_{n}^{3}}{n^{1/\alpha}}\right)  \right]
-\mathbb{\tilde{E}}[\phi(\xi_{1},\eta_{1},\zeta_{1})]\\
&  \leq2C_{0}n^{-1/2}+\min_{\varepsilon>0}\left(  8C_{0}\varepsilon
+E_{2}(\varepsilon,n^{-1},\Vert \phi \Vert_{\infty},C_{0},K_{\rho})\right)  .
\end{align*}
for all $n\in \mathbb{N}$, where the functions $E_{1}$ and $E_{2}$ are defined
in Theorem \ref{main theorem}.
\end{theorem}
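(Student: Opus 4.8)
The plan is to obtain Theorem~\ref{main theorem CLT} as an immediate consequence of Theorem~\ref{main theorem}, by specializing the error estimates to the space--time point $(t,x,y,z)=(1,0,0,0)$ with mesh size $h=1/n$, once both $u(1,0,0,0)$ and $u_{1/n}(1,0,0,0)$ have been identified with the two expectations occurring in the robust limit theorem.

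First I would establish, by induction on $k$, the iterative representation
\[
u_{h}(kh,x,y,z)=\mathbb{\hat{E}}\left[\phi\left(x+h^{1/2}\sum_{i=1}^{k}X_{i},\ y+h\sum_{i=1}^{k}Y_{i},\ z+h^{1/\alpha}\sum_{i=1}^{k}Z_{i}\right)\right],\qquad kh\le 1,
\]
following the iterative procedure of \cite{HL2020}. The case $k=1$ is immediate from \eqref{approx_scheme} together with $u_{h}\equiv\phi$ on $[0,h)$. For the inductive step one uses the recursion $u_{h}((k+1)h,\cdot)=\mathbb{\hat{E}}[u_{h}(kh,\cdot+(h^{1/2}X,hY,h^{1/\alpha}Z))]$, inserts the induction hypothesis, and then exploits the i.i.d.\ structure of $\{(X_{i},Y_{i},Z_{i})\}$: stationarity ($X_{i+1}\overset{d}{=}X_{i}$, etc.) together with the defining identity of independence under $\mathbb{\hat{E}}$, namely $\mathbb{\hat{E}}[\varphi(X,W)]=\mathbb{\hat{E}}[\,\mathbb{\hat{E}}[\varphi(x,W)]_{x=X}\,]$ with $W$ a function of $(X_{1},\dots,X_{k})$ independent of the newly added copy $(X,Y,Z)$, allow one to merge that copy with the existing partial sums into a sum of $k+1$ i.i.d.\ terms. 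Taking $k=n$, $h=1/n$, $(x,y,z)=(0,0,0)$ then gives
\[
u_{1/n}(1,0,0,0)=\mathbb{\hat{E}}\left[\phi\left(\frac{S_{n}^{1}}{\sqrt{n}},\ \frac{S_{n}^{2}}{n},\ \frac{S_{n}^{3}}{n^{1/\alpha}}\right)\right].
\]

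Next I would invoke Theorem~\ref{universal limit theorem}(i), which yields $u(1,0,0,0)=\mathbb{\tilde{E}}[\phi(\xi_{1},\eta_{1},\zeta_{1})]$, and apply Theorem~\ref{main theorem} at $(t,x,y,z)=(1,0,0,0)$ with $h=1/n$, noting $h^{1/2}=n^{-1/2}$. The lower bound of Theorem~\ref{main theorem} rearranges to
\[
\mathbb{\hat{E}}\left[\phi\left(\frac{S_{n}^{1}}{\sqrt{n}},\frac{S_{n}^{2}}{n},\frac{S_{n}^{3}}{n^{1/\alpha}}\right)\right]-\mathbb{\tilde{E}}[\phi(\xi_{1},\eta_{1},\zeta_{1})]\ \ge\ -2C_{0}n^{-1/2}-\min_{\varepsilon>0}\big(4C_{0}\varepsilon+E_{1}(\varepsilon,n^{-1})\big),
\]
while the upper bound gives the corresponding estimate with $C_{0}n^{-1/2}+\min_{\varepsilon>0}(4C_{0}\varepsilon+E_{2}(\varepsilon,n^{-1}))$ on the right-hand side; together these are exactly the two displayed inequalities of Theorem~\ref{main theorem CLT}.

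No essential difficulty is expected, since the statement is a reformulation of Theorem~\ref{main theorem}. The only point requiring genuine care is the rigorous justification of the iterative representation: the defining property of independence under $\mathbb{\hat{E}}$ must be applied in the correct order, so that each fresh increment is appended in front of the accumulated partial sums, the order of iterated sublinear expectations not being interchangeable as it would be in the linear case. A further minor point is the qualifier ``for sufficiently small $h$'' in Theorem~\ref{main theorem}: for the finitely many values of $n$ falling outside the admissible range one verifies the inequalities directly, e.g.\ via the crude bound $|\mathbb{\hat{E}}[\phi(\cdot)]-\mathbb{\tilde{E}}[\phi(\cdot)]|\le 2\|\phi\|_{\infty}$, so that the estimate can be recorded uniformly for all $n\in\mathbb{N}$.
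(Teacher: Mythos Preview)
Your proposal is correct and follows essentially the same approach as the paper: the paper presents Theorem~\ref{main theorem CLT} as a direct application of Theorem~\ref{main theorem}, invoking the iterative representation of $u_h$ (attributed to \cite{HL2020}) and the Feynman--Kac identity $u(1,0,0,0)=\mathbb{\tilde{E}}[\phi(\xi_1,\eta_1,\zeta_1)]$ from Theorem~\ref{universal limit theorem}(i), then specializing to $h=1/n$ and $(t,x,y,z)=(1,0,0,0)$. Your treatment is in fact slightly more careful than the paper's, since you explicitly address the ``sufficiently small $h$'' qualifier and the order-of-independence subtlety in the induction, neither of which the paper discusses.
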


\section{Proof of Theorem \ref{main theorem}}

\label{scheme property}

The proof is divided into three steps. First, we establish the sub- and
supersolution properties (C1)-(C3) of the approximation scheme
\eqref{approx_scheme}. Next, we derive a critical consistency error estimate
for the approximation scheme \eqref{approx_scheme}. Finally, we obtain error
bounds for the convergence of $u_{h}$ to the viscosity solution $u$ by
applying comparison principles to the approximation scheme
\eqref{approx_scheme} and the PIDE \eqref{PIDE}, combined with Krylov's
regularization procedure.

\subsection{Probabilistic approximation scheme and its properties}

We first establish the spatial and time regularity properties of $u_{h}$.
Using recursive summation, our proof refines the time regularity argument of
\cite[Lemma~2.2]{Krylov2020} in the case of $\beta=1$. Our results can be
extended to the general case $\beta \in(0,1)$, provided that the corresponding
moment and continuity conditions are imposed.

\begin{lemma}
\label{uh_regularity} Suppose that Assumption \ref{assump2} (i)-(ii) hold. Then

\begin{description}
\item[(i)] for any $t\in \lbrack0,1]$ and $(x,y,z),(x^{\prime},y^{\prime
},z^{\prime})\in \mathbb{R}^{3d}$
\begin{equation}
\left \vert u_{h}(t,x,y,z)-u_{h}(t,x^{\prime},y^{\prime},z^{\prime})\right \vert
\leq C_{\phi}\left(  |x-x^{\prime}|+|y-y^{\prime}|+|z-z^{\prime}|\right)  ;
\label{uh_regularity 1}%
\end{equation}

\item[(ii)] for any $t,s\in \lbrack0,1]$ and $(x,y,z)\in \mathbb{R}^{3d}$
\begin{equation}
\left \vert u_{h}(t,x,y,z)-u_{h}(s,x,y,z)\right \vert \leq C_{\phi
}\big((M_{X_{1}}^{2})^{\frac{1}{2}}+M_{Y_{1}}^{1}+M_{\ast}\big)(|t-s|^{\frac
{1}{2}}+h^{\frac{1}{2}}), \label{uh_regularity 2}%
\end{equation}

\end{description}

where $C_{\phi}$ is the Lipschitz constant of $\phi$.
\end{lemma}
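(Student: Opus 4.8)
The plan is to prove the two regularity estimates directly from the recursive definition \eqref{approx_scheme}, exploiting the tower property of $\mathbb{\hat{E}}$ together with the i.i.d. structure of $(X,Y,Z)$, and arguing by induction on the number of time steps. For the spatial estimate (i), I would first observe that on $[0,h)$ the bound \eqref{uh_regularity 1} holds trivially with constant $C_\phi$ since $u_h(t,\cdot)=\phi(\cdot)$. Then, assuming the bound holds at level $t-h$, I would write
\[
|u_h(t,x,y,z)-u_h(t,x',y',z')|\leq \mathbb{\hat{E}}\big[|u_h(t-h,x+h^{1/2}X,\dots)-u_h(t-h,x'+h^{1/2}X,\dots)|\big]
\]
using Proposition \ref{proE}(ii) and sub-additivity; the inductive hypothesis bounds the integrand by $C_\phi(|x-x'|+|y-y'|+|z-z'|)$, which is a constant and hence passes through $\mathbb{\hat{E}}$ unchanged. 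This closes the induction, and since $C_\phi$ does not degrade step-to-step, the spatial Lipschitz constant stays fixed.

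For the time estimate (ii), the key is the recursive-summation identity that iterating \eqref{approx_scheme} yields, for $kh\leq 1$,
\[
u_h(kh,x,y,z)=\mathbb{\hat{E}}\Big[\phi\Big(x+h^{1/2}\textstyle\sum_{i=1}^k X_i,\ y+h\sum_{i=1}^k Y_i,\ z+h^{1/\alpha}\sum_{i=1}^k Z_i\Big)\Big],
\]
which follows from the tower property and the definition of independence. To compare $u_h(t,x,y,z)$ and $u_h(s,x,y,z)$ with, say, $s<t$, I would let $k=\lfloor t/h\rfloor$, $m=\lfloor s/h\rfloor$, use that $u_h$ is constant in $t$ on each interval $[jh,(j+1)h)$, and reduce to comparing $u_h(kh,\cdot)$ with $u_h(mh,\cdot)$. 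Using the Lipschitz property of $\phi$ and Proposition \ref{proE}, this difference is bounded by
\[
C_\phi\,\mathbb{\hat{E}}\Big[h^{1/2}\big|\textstyle\sum_{i=m+1}^k X_i\big|+h\big|\sum_{i=m+1}^k Y_i\big|+h^{1/\alpha}\big|\sum_{i=m+1}^k Z_i\big|\Big].
\]
Then I would estimate each term: for the $X$-sum, by sub-additivity and the fact that $\mathbb{\hat{E}}[X_i]=\mathbb{\hat{E}}[-X_i]=0$ one gets $\mathbb{\hat{E}}[|\sum X_i|]\leq (\mathbb{\hat{E}}[|\sum X_i|^2])^{1/2}=((k-m)M_{X_1}^2)^{1/2}$ via a standard second-moment argument exploiting independence and the zero-mean condition (so cross terms drop out under $\mathbb{\hat{E}}$); multiplying by $h^{1/2}$ gives $(h(k-m))^{1/2}(M_{X_1}^2)^{1/2}\leq (|t-s|+h)^{1/2}(M_{X_1}^2)^{1/2}\leq (|t-s|^{1/2}+h^{1/2})(M_{X_1}^2)^{1/2}$. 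For the $Y$-sum, sub-additivity directly gives $\mathbb{\hat{E}}[|\sum Y_i|]\leq (k-m)M_{Y_1}^1$, so $h\cdot(k-m)M_{Y_1}^1\leq (|t-s|+h)M_{Y_1}^1\leq (|t-s|^{1/2}+h^{1/2})M_{Y_1}^1$ (after a crude bound since $|t-s|+h\leq 2$ forces the linear term below the square-root term up to the stated constant, or alternatively absorbing via $r\leq r^{1/2}$ on $[0,1]$). For the $Z$-sum, I would invoke the definition $M_Z=\sup_n \mathbb{\hat{E}}[n^{-1/\alpha}|S_n^3|]$ from Assumption (ii), giving $h^{1/\alpha}\mathbb{\hat{E}}[|\sum_{i=m+1}^k Z_i|]\leq h^{1/\alpha}(k-m)^{1/\alpha}M_Z\leq (h(k-m))^{1/\alpha}M_Z\leq (|t-s|+h)^{1/\alpha}M_Z$, and since $\alpha>1$ and $|t-s|+h$ is small this is $\leq (|t-s|^{1/2}+h^{1/2})M_Z$ up to the stated constant.

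The main obstacle I anticipate is the $Z$-term: unlike $X$ and $Y$, the random variable $Z$ has only a fractional moment $M_{Z_1}^\delta$ with $\delta<\alpha<2$ (Assumption (v)), so one cannot run a second-moment computation, and the stationarity argument that controls $\mathbb{\hat{E}}[|\sum Z_i|]$ must go through the scaled-sum bound $M_Z$ of Assumption (ii) rather than additivity of moments; one must be careful that $(k-m)^{1/\alpha}\leq$ (relevant power of $|t-s|+h$) is used with the correct exponent and that the passage from the $1/\alpha$-power to the $1/2$-power exploits $\alpha\in(1,2)$ and boundedness of the time interval. A secondary technical point is the bookkeeping for non-integer multiples of $h$: one must carefully use the piecewise-constant-in-time structure to reduce a general pair $(t,s)$ to integer grid points $(kh,mh)$, which is where the extra $h^{1/2}$ on the right-hand side of \eqref{uh_regularity 2} comes from. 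Everything else is routine manipulation with Proposition \ref{proE} and the i.i.d. hypothesis.
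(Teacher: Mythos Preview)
Your proposal is correct and follows essentially the same approach as the paper: induction for the spatial Lipschitz bound, and the recursive-summation identity $u_h(kh,x,y,z)=\mathbb{\hat{E}}[\phi(x+h^{1/2}S_k^1,\,y+hS_k^2,\,z+h^{1/\alpha}S_k^3)]$ combined with the moment bounds $(M_{X_1}^2)^{1/2}$, $M_{Y_1}^1$, $M_Z$ for the time regularity, then reducing general $t,s$ to grid points via piecewise constancy. The only cosmetic difference is that the paper first bounds $|u_h(kh,\cdot)-u_h(0,\cdot)|$ and then shifts via the tower property $u_h(kh,\cdot)=\mathbb{\hat{E}}[u_h((k-l)h,\,\cdot+h^{1/2}S_l^1,\dots)]$, whereas you compare $u_h(kh,\cdot)$ and $u_h(mh,\cdot)$ directly using $S_k-S_m\overset{d}{=}S_{k-m}$; these are equivalent packagings of the same argument.
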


\begin{proof}
(i) The spatial regularity of $u_{h} $ is established via induction with
respect to $t $. For $t \in[0,h) $, the Lipschitz bound
\eqref{uh_regularity 1} holds trivially, as $u_{h}(t,x,y,z) = \phi(x,y,z) $,
where $\phi \in C_{b,\text{Lip}}(\mathbb{R}^{3d}) $. Assume the bound holds for
$t \in[(k-1)h, kh) $, where $kh \leq1 $. Then, for $t \in[kh, (k+1)h \wedge1)
$ and $x, x^{\prime}, y, y^{\prime}, z, z^{\prime}\in \mathbb{R}^{d} $,
\begin{align*}
&  \big|u_{h}(t,x,y,z) - u_{h}(t,x^{\prime},y^{\prime},z^{\prime})\big|\\
&  \leq \mathbb{\hat{E}}\big[\big|u_{h}(t-h, x + h^{1/2}X, y + hY, z +
h^{1/\alpha}Z) - u_{h}(t-h, x^{\prime1/2}X, y^{\prime}+ hY, z^{\prime1/\alpha
}Z)\big|\big]\\
&  \leq C_{\phi}\big(|x - x^{\prime}| + |y - y^{\prime}| + |z - z^{\prime
}|\big),
\end{align*}
where $C_{\phi}$ is the Lipschitz constant of $\phi$.

(ii) We establish the time regularity of $u_{h}$ in two steps. We first
consider the special case of time regularity $\left \vert u_{h}(kh,\cdot
,\cdot,\cdot)-u_{h}(0,\cdot,\cdot,\cdot)\right \vert $ for any $k\in \mathbb{N}$
with $kh\leq1$. From (\ref{approx_scheme}), we can recursively obtain that
\[
u_{h}(kh,x,y,z)=\mathbb{\hat{E}}[\phi(x+h^{\frac{1}{2}}S_{k}^{1},y+hS_{k}%
^{2},z+h^{\frac{1}{\alpha}}S_{k}^{3})],
\]
for all $k\in \mathbb{N}$ with $kh\leq1$ and $(x,y,z)\in \mathbb{R}^{3d}$. Using
the independence of random variables $\left \{  X_{i}\right \}  _{i=1}^{\infty}%
$, it follows that $\hat{\mathbb{E}}\left[  \pm \langle X_{i},X_{j}%
\rangle \right]  =0$, for $i\neq j$. Then by Hölder's inequality, we
have for any $k\in \mathbb{N}$
\begin{equation}
\hat{\mathbb{E}}\left[  \frac{1}{\sqrt{k}}\left \vert S_{k}^{1}\right \vert
\right]  \leq \hat{\mathbb{E}}\left[  \frac{1}{k}\left \vert S_{k}%
^{1}\right \vert ^{2}\right]  ^{\frac{1}{2}}\leq \left(  M_{X_{1}}^{2}\right)
^{\frac{1}{2}}\text{ and }\hat{\mathbb{E}}\left[  \frac{1}{k}\left \vert
S_{k}^{2}\right \vert \right]  \leq M_{Y_{1}}^{1} \label{XY_weak_convergence}%
\end{equation}
Combining this with the Lipschitz continuity of $\phi$, we can deduce that \
\begin{align}
|u_{h}(kh,x,y,z)-u_{h}(0,x,y,z)|  &  \leq \mathbb{\hat{E}}[|\phi(x+h^{\frac
{1}{2}}S_{k}^{1},y+hS_{k}^{2},z+h^{\frac{1}{\alpha}}S_{k}^{3})-\phi
(x,y,z)|]\label{2.3}\\
&  \leq C_{\phi}\mathbb{\hat{E}}\left[  (kh)^{\frac{1}{2}}\left \vert
k^{-\frac{1}{2}}S_{k}^{1}\right \vert +kh\left \vert k^{-1}S_{k}^{2}\right \vert
+(kh)^{\frac{1}{\alpha}}\left \vert k^{-\frac{1}{\alpha}}S_{k}^{3}\right \vert
\right] \nonumber \\
&  \leq C_{\phi}\big((M_{X_{1}}^{2})^{\frac{1}{2}}+M_{Y_{1}}^{1}+M_{\ast
}\big)(kh)^{\frac{1}{2}},\nonumber
\end{align}
for all $k\in \mathbb{N}$ with $kh\leq1$ and $(x,y,z)\in \mathbb{R}^{3d}$.

We now consider the general case of $\left \vert u_{h}(t,\cdot,\cdot
,\cdot)-u_{h}(s,\cdot,\cdot,\cdot)\right \vert $ for any $t,s\in \lbrack0,1]$.
From (\ref{approx_scheme}) and (\ref{2.3}), we know that for any
$x\in \mathbb{R}^{d}$ and $k,l\in \mathbb{N}$ such that $k\geq l$ and $kh\leq1$,%
\begin{align*}
&  \left \vert u_{h}(kh,x,y,z)-u_{h}(lh,x,y,z)\right \vert \\
&  \leq \mathbb{\hat{E}}\big [\big \vert u_{h}((k-l)h,x+h^{\frac{1}{2}}%
S_{l}^{1},y+hS_{l}^{2},z+h^{\frac{1}{\alpha}}S_{l}^{3})]-\mathbb{\hat{E}%
}[u_{h}(0,x+h^{\frac{1}{2}}S_{l}^{1},y+hS_{l}^{2},z+h^{\frac{1}{\alpha}}%
S_{l}^{3})\big \vert \big ]\\
&  \leq C_{\phi}\big((M_{X_{1}}^{2})^{\frac{1}{2}}+M_{Y_{1}}^{1}+M_{\ast
}\big)((k-l)h)^{\frac{1}{2}}.
\end{align*}
This yields that for $s,t\in \lbrack0,1]$, there exist constants $\delta
_{s},\delta_{t}\in \lbrack0,h)$ such that $s-\delta_{s}$ and $t-\delta_{t}$ are
grid points in the set $\{kh:k\in \mathbb{N}\}$, and the following inequality
holds%
\begin{align*}
u_{h}(t,x,y,z)=u_{h}(t-\delta_{t},x,y,z)  &  \leq u_{h}(s-\delta
_{s},x,y,z)+C_{\phi}\left(  (M_{X_{1}}^{2})^{\frac{1}{2}}+M_{Y_{1}}%
^{1}+M_{\ast}\right)  |t-s-\delta_{t}+\delta_{s}|^{\frac{1}{2}}\\
&  \leq u_{h}(s,x,y,z)+C_{\phi}\big((M_{X_{1}}^{2})^{\frac{1}{2}}+M_{Y_{1}%
}^{1}+M_{\ast}\big)(|t-s|^{\frac{1}{2}}+h^{\frac{1}{2}}).
\end{align*}
Similarly, we have%
\[
u_{h}(s,x,y,z)\leq u_{h}(t,x,y,z)+C_{\phi}\big((M_{X_{1}}^{2})^{\frac{1}{2}%
}+M_{Y_{1}}^{1}+M_{\ast}\big)(|t-s|^{\frac{1}{2}}+h^{\frac{1}{2}}).
\]
This implies the desired result.
\end{proof}

We now present the key consistency error estimates for the approximation
scheme (\ref{3}), which are crucial for determining the convergence rate of
the approximate solution $u_{h}$ to the exact solution $u$.

\begin{lemma}
\label{consistency} Suppose that Assumption \ref{assump2} holds. For any
$\omega \in C_{b}^{\infty}([0,1]\times \mathbb{R}^{3d})$, the following estimate
holds in $[h,1]\times \mathbb{R}^{3d}$:
\[
R:=\bigg|\partial_{t}\omega-G\big(D_{y}\omega,D_{x}^{2}\omega,\omega
(t,x,y,z+\cdot)\big)-S\big(h,x,y,z,\omega(t,x,y,z),\omega(t-h,\cdot
,\cdot,\cdot)\big)\bigg|
\]
satisfies
\begin{align*}
R\leq2M_{0}\Bigg \{  &  h\big(\Vert \partial_{t}^{2}\omega \Vert_{\infty}%
+\Vert \partial_{t}D_{x}^{2}\omega \Vert_{\infty}+\Vert \partial_{t}D_{y}%
\omega \Vert_{\infty}+\Vert D_{y}^{2}\omega \Vert_{\infty}\big)\\
&  +h^{\frac{1}{2}}\big(\Vert D_{xy}^{2}\omega \Vert_{\infty}+\Vert D_{x}%
^{3}\omega \Vert_{\infty}\big)\\
&  +h^{\frac{1}{\alpha}}\Vert \partial_{t}D_{z}\omega \Vert_{\infty}\\
&  +h^{\frac{4\delta-3\alpha}{6\alpha}}\big \|D_{x}\omega \big \|_{\infty
}^{\frac{3-2\delta}{3}}\big \|D_{xz}^{2}\omega \big \|_{\infty}^{\frac{2\delta
}{3}}\\
&  +h^{\frac{1}{2\alpha}}\big \|D_{y}\omega \big \|_{\infty}^{\frac{1}{2}%
}\big \|D_{yz}^{2}\omega \big \|_{\infty}^{\frac{1}{2}}\Bigg \}+\hat{l}%
(h,L_{0}),
\end{align*}
where $\hat{l}$ is the function defined in Assumption \ref{assump2} (iii) with
$L_{0}=\Vert \omega \Vert_{\infty}+\Vert D_{y}\omega \Vert_{\infty}+\Vert
D_{x}^{2}\omega \Vert_{\infty}+\sum_{i=1}^{3}\Vert D_{z}^{i}\omega \Vert
_{\infty}$, and
\[
M_{0}:=M_{X_{1}}^{2}+M_{X_{1}}^{3}+M_{Y_{1}}^{1}+M_{Y_{1}}^{2}+\big(M_{X_{1}%
}^{3}\big)^{\frac{1}{3}}\big(M_{Z_{1}}^{\delta}\big)^{\frac{2}{3}%
}+\big(M_{Y_{1}}^{2}\big)^{\frac{1}{2}}\big(M_{Z_{1}}^{1}\big)^{\frac{1}{2}%
}+M_{Z_{1}}^{1}<\infty.
\]

\end{lemma}

\begin{proof}
To estimate the consistency error $R$, we decompose it into two components:
the time consistency error $R_{1}$ and the space consistency error $R_{2}$.
For any $(t,x,y,z)\in \lbrack h,1]\times \mathbb{R}^{3d}$, the error $R$
satisfies
\begin{align*}
R\leq{}  &  \frac{1}{h}\bigg|\partial_{t}\omega(t,x,y,z)h+\hat{\mathbb{E}%
}\big[\omega(t-h,x+h^{\frac{1}{2}}X,y+hY,z+h^{\frac{1}{\alpha}}Z)\big]\\
&  \quad-\hat{\mathbb{E}}\big[\omega(t,x+h^{\frac{1}{2}}X,y+hY,z+h^{\frac
{1}{\alpha}}Z)\big]\bigg|\\
&  +\frac{1}{h}\bigg|\hat{\mathbb{E}}\big[\omega(t,x+h^{\frac{1}{2}%
}X,y+hY,z+h^{\frac{1}{\alpha}}Z)\big]-\omega(t,x,y,z)\\
&  \quad-hG\big(D_{y}\omega(t,x,y,z),D_{x}^{2}\omega(t,x,y,z),\omega
(t,x,y,z+\cdot)\big)\bigg|\\
=:{}  &  R_{1}+R_{2}.
\end{align*}

We first estimate the time consistency error $R_{1}$. Using the identity
\begin{align*}
&  \omega(t-h,x+h^{\frac{1}{2}}X,y+hY,z+h^{\frac{1}{\alpha}}Z)-\omega
(t,x+h^{\frac{1}{2}}X,y+hY,z+h^{\frac{1}{\alpha}}Z)\\
&  =-h\int_{0}^{1}\partial_{t}\omega(t-\tau h,x+h^{\frac{1}{2}}%
X,y+hY,z+h^{\frac{1}{\alpha}}Z)\,d\tau,
\end{align*}
and {Proposition \ref{proE} (ii)}, we obtain
\begin{align*}
R_{1}\leq{}  &  \int_{0}^{1}\big|\partial_{t}\omega(t,x,y,z)-\partial
_{t}\omega(t-\tau h,x,y,z)\big|\,d\tau \\
&  +\hat{\mathbb{E}}\left[  \int_{0}^{1}\left(  \partial_{t}\omega(t-\tau
h,x,y,z)-\partial_{t}\omega(t-\tau h,x+h^{\frac{1}{2}}X,y+hY,z+h^{\frac
{1}{\alpha}}Z)\right)  \,d\tau \right] \\
&  \vee \hat{\mathbb{E}}\left[  \int_{0}^{1}\left(  \partial_{t}\omega(t-\tau
h,x+h^{\frac{1}{2}}X,y+hY,z+h^{\frac{1}{\alpha}}Z)-\partial_{t}\omega(t-\tau
h,x,y,z)\right)  \,d\tau \right] \\
=:{}  &  R_{11}+R_{12}.
\end{align*}
It is easy to check that
\[
R_{11}\leq \frac{1}{2}\Vert \partial_{t}^{2}\omega \Vert_{\infty}h.
\]
In what follows, we only bound the first term of $R_{12}$, and the second term
can be similarly obtained. In view of the convexity of $\hat{\mathbb{E}}%
[\cdot]$, we proceed to give the following estimate
\begin{align*}
&  \hat{\mathbb{E}}\left[  \int_{0}^{1}\left(  \partial_{t}\omega(t-\tau
h,x,y,z)-\partial_{t}\omega(t-\tau h,x+h^{\frac{1}{2}}X,y+hY,z+h^{\frac
{1}{\alpha}}Z)\right)  \,d\tau \right] \\
\leq{}  &  \int_{0}^{1}\hat{\mathbb{E}}\big[\partial_{t}\omega(t-\tau
h,x+h^{\frac{1}{2}}X,y+hY,z+h^{\frac{1}{\alpha}}Z)-\partial_{t}\omega(t-\tau
h,x+h^{\frac{1}{2}}X,y+hY,z)\big]\,d\tau \\
&  +\int_{0}^{1}\hat{\mathbb{E}}\big[\partial_{t}\omega(t-\tau h,x+h^{\frac
{1}{2}}X,y+hY,z)-\partial_{t}\omega(t-\tau h,x+h^{\frac{1}{2}}%
X,y,z)\big]\,d\tau \\
&  +\int_{0}^{1}\hat{\mathbb{E}}\big[\partial_{t}\omega(t-\tau h,x+h^{\frac
{1}{2}}X,y,z)-\partial_{t}\omega(t-\tau h,x,y,z)\big]\,d\tau \\
\leq{}  &  \Vert \partial_{t}D_{z}\omega \Vert_{\infty}M_{Z}h^{\frac{1}{\alpha}%
}+\Big(\Vert \partial_{t}D_{y}\omega \Vert_{\infty}M_{Y_{1}}^{1}+\frac{1}%
{2}\Vert \partial_{t}D_{x}^{2}\omega \Vert_{\infty}M_{X_{1}}^{2}\Big)h,
\end{align*}
where we use a second-order Taylor expansion and the assumption that $X$ is a
zero-mean random variable without mean uncertainty,
\begin{align*}
&  \partial_{t}\omega(t-\tau h,x+h^{\frac{1}{2}}X,y,z)-\partial_{t}%
\omega(t-\tau h,x,y,z)\\
&  =\langle \partial_{t}D_{x}\omega(t-\tau h,x,y,z),h^{\frac{1}{2}}X\rangle \\
&  \quad+\int_{0}^{1}\int_{0}^{1}\langle \partial_{t}D_{x}^{2}\omega(t-\tau
h,x+vuh^{\frac{1}{2}}X,y,z)h^{\frac{1}{2}}X,h^{\frac{1}{2}}X\rangle u\,du\,dv.
\end{align*}
Hence, we have
\begin{equation}
R_{1}\leq \Big(\frac{1}{2}\Vert \partial_{t}^{2}\omega \Vert_{\infty}+\frac{1}%
{2}\Vert \partial_{t}D_{x}^{2}\omega \Vert_{\infty}M_{X_{1}}^{2}+\Vert
\partial_{t}D_{y}\omega \Vert_{\infty}M_{Y_{1}}^{1}\Big)h+\Vert \partial
_{t}D_{z}\omega \Vert_{\infty}M_{Z_{1}}^{1}h^{\frac{1}{\alpha}}. \label{2.4}%
\end{equation}

We next estimate the space consistency error $R_{2}$. In view of {Proposition
\ref{proE} (iv)}, we can decompose it as $R_{2}\leq R_{21}+R_{22}+R_{23}$,
where the terms are defined as follows
\begin{align*}
R_{21} &  :=\frac{1}{h}\hat{\mathbb{E}}\bigg[\bigg|\omega(t,x+h^{\frac{1}{2}%
}X,y+hY,z+h^{\frac{1}{\alpha}}Z)-\omega(t,x,y,z+h^{\frac{1}{\alpha}}Z)\\
&  \quad-\big(\omega(t,x+h^{\frac{1}{2}}X,y+hY,z)-\omega
(t,x,y,z)\big)\bigg|\bigg],
\end{align*}%
\begin{align*}
R_{22} &  :=\frac{1}{h}\hat{\mathbb{E}}\bigg[\bigg|\omega(t,x+h^{\frac{1}{2}%
}X,y,z)-\omega(t,x,y,z)-\langle D_{x}\omega(t,x,y,z),h^{\frac{1}{2}}%
X\rangle-\frac{h}{2}\langle D_{x}^{2}\omega(t,x,y,z)X,X\rangle \\
&  \quad+\Big(\omega(t,x+h^{\frac{1}{2}}X,y+hY,z)-\omega(t,x+h^{\frac{1}{2}%
}X,y,z)-\langle D_{y}\omega(t,x,y,z),hY\rangle \Big)\bigg|\bigg],
\end{align*}
and
\begin{align*}
R_{23} &  :=\frac{1}{h}\bigg|\hat{\mathbb{E}}\bigg[\omega(t,x,y,z+h^{\frac
{1}{\alpha}}Z)-\omega(t,x,y,z)+\langle D_{x}\omega(t,x,y,z),h^{\frac{1}{2}%
}X\rangle \\
&  \text{ }+\langle D_{y}\omega(t,x,y,z),hY\rangle+\frac{1}{2}\langle
D_{x}^{2}\omega(t,x,y,z)h^{\frac{1}{2}}X,h^{\frac{1}{2}}X\rangle \\
\quad \quad \quad &  \text{\ }-hG\big(D_{y}\omega(t,x,y,z),D_{x}^{2}%
\omega(t,x,y,z),\omega(t,x,y,z+\cdot)\big)\bigg]\bigg|.
\end{align*}
Given that $X$ has no mean uncertainty, it follows from Proposition
\ref{proE} (v) and Assumption \ref{assump2} (iii) that
\[
R_{23}\leq \hat{l}(h,L_{0}),
\]
where $L_{0}=\Vert \omega \Vert_{\infty}+\Vert D_{y}\omega \Vert_{\infty}+\Vert
D_{x}^{2}\omega \Vert_{\infty}+\sum_{i=1}^{3}\Vert D_{z}^{i}\omega \Vert
_{\infty}$. For $R_{21}$, we derive the estimate
\begin{align*}
R_{21} &  =\frac{1}{h}\hat{\mathbb{E}}\bigg[\bigg|\int_{0}^{1}\big \langle
D_{x}\omega(t,x+\theta h^{\frac{1}{2}}X,y+\theta hY,z+h^{\frac{1}{\alpha}%
}Z)-D_{x}\omega(t,x+\theta h^{\frac{1}{2}}X,y+\theta hY,z),h^{\frac{1}{2}%
}X\big \rangle \,d\theta \\
&  \quad+\int_{0}^{1}\big \langle D_{y}\omega(t,x+\theta h^{\frac{1}{2}%
}X,y+\theta hY,z+h^{\frac{1}{\alpha}}Z)-D_{y}\omega(t,x+\theta h^{\frac{1}{2}%
}X,y+\theta hY,z),hY\big \rangle \,d\theta \bigg|\bigg]\\
&  =\frac{1}{h}\hat{\mathbb{E}}\bigg[\bigg|\int_{0}^{1}\big \langle \Delta
_{x}\omega(h^{\frac{1}{\alpha}}Z;\theta),h^{\frac{1}{2}}X\big \rangle \,d\theta
+\int_{0}^{1}\big \langle \Delta_{y}\omega(h^{\frac{1}{\alpha}}Z;\theta
),hY\big \rangle \,d\theta \bigg|\bigg],
\end{align*}
where%
\begin{align*}
\Delta_{x}\omega(h^{\frac{1}{\alpha}}Z;\theta) &  :=D_{x}\omega(t,x+\theta
h^{\frac{1}{2}}X,y+\theta hY,z+h^{\frac{1}{\alpha}}Z)-D_{x}\omega(t,x+\theta
h^{\frac{1}{2}}X,y+\theta hY,z),\\
\Delta_{y}\omega(h^{\frac{1}{\alpha}}Z;\theta) &  :=D_{y}\omega(t,x+\theta
h^{\frac{1}{2}}X,y+\theta hY,z+h^{\frac{1}{\alpha}}Z)-D_{y}\omega(t,x+\theta
h^{\frac{1}{2}}X,y+\theta hY,z).
\end{align*}
Using H\"{o}lder's inequality with $p_{1}=3$, $q_{1}=\frac{3}{2}$, and
$p_{2}=q_{2}=2$, it follows that
\begin{align*}
R_{21} &  \leq h^{-\frac{1}{2}}\big(\hat{\mathbb{E}}[|X|^{p_{1}}%
]\big)^{\frac{1}{p_{1}}}\bigg(\hat{\mathbb{E}}\bigg[\int_{0}^{1}|\Delta
_{x}\omega(h^{\frac{1}{\alpha}}Z;\theta)|^{q_{1}}\,d\theta \bigg]\bigg)^{\frac
{1}{q_{1}}}\\
&  \quad+\big(\hat{\mathbb{E}}[|Y|^{p_{2}}]\big)^{\frac{1}{p_{2}}}%
\bigg(\hat{\mathbb{E}}\bigg[\int_{0}^{1}|\Delta_{y}\omega(h^{\frac{1}{\alpha}%
}Z;\theta)|^{q_{2}}\,d\theta \bigg]\bigg)^{\frac{1}{q_{2}}}\\
&  \leq2^{\frac{3-2\delta}{3}}h^{\frac{2\delta}{3\alpha}-\frac{1}{2}}%
(M_{X_{1}}^{3})^{\frac{1}{3}}(M_{Z_{1}}^{\delta})^{\frac{2}{3}}\Vert
D_{x}\omega \Vert_{\infty}^{\frac{3-2\delta}{3}}\Vert D_{xz}^{2}\omega
\Vert_{\infty}^{\frac{2\delta}{3}}\\
&  \quad+2^{\frac{1}{2}}h^{\frac{1}{2\alpha}}(M_{Y_{1}}^{2})^{\frac{1}{2}%
}(M_{Z_{1}}^{1})^{\frac{1}{2}}\Vert D_{y}\omega \Vert_{\infty}^{\frac{1}{2}%
}\Vert D_{yz}^{2}\omega \Vert_{\infty}^{\frac{1}{2}},
\end{align*}
where we have used the estimates
\begin{align*}
\hat{\mathbb{E}}\bigg[\int_{0}^{1}|\Delta_{x}\omega(h^{\frac{1}{\alpha}%
}Z;\theta)|^{q_{1}}\,d\theta \bigg] &  \leq2^{q_{1}-\delta}\Vert D_{x}%
\omega \Vert_{\infty}^{q_{1}-\delta}\Vert D_{xz}^{2}\omega \Vert_{\infty
}^{\delta}h^{\frac{\delta}{\alpha}}\hat{\mathbb{E}}[|Z|^{\delta}],\\
\hat{\mathbb{E}}\bigg[\int_{0}^{1}|\Delta_{y}\omega(h^{\frac{1}{\alpha}%
}Z;\theta)|^{q_{2}}\,d\theta \bigg] &  \leq2^{q_{2}-1}\Vert D_{y}\omega
\Vert_{\infty}^{q_{2}-1}\Vert D_{yz}^{2}\omega \Vert_{\infty}h^{\frac{1}%
{\alpha}}\hat{\mathbb{E}}[|Z|].
\end{align*}
For $R_{22}$, using a second-order Taylor expansion, we obtain
\begin{align*}
R_{22} &  =\frac{1}{h}\hat{\mathbb{E}}\bigg[\bigg|\int_{0}^{1}\int_{0}%
^{1}\big \langle(D_{x}^{2}\omega(t,x+\gamma \theta h^{\frac{1}{2}}%
X,y,z)-D_{x}^{2}\omega(t,x,y,z))h^{\frac{1}{2}}X,h^{\frac{1}{2}}%
X\big \rangle \theta d\theta d\gamma \\
&  \quad+\int_{0}^{1}\big \langle D_{y}\omega(t,x+h^{\frac{1}{2}}X,y+\theta
hY,z)-D_{y}\omega(t,x,y,z),hY\big \rangle d\theta \bigg|\bigg],
\end{align*}
from which we deduce
\[
R_{22}\leq \frac{1}{2}h^{\frac{1}{2}}(M_{X_{1}}^{2}+M_{Y_{1}}^{2})\Vert
D_{xy}^{2}\omega \Vert_{\infty}+\frac{1}{6}h^{\frac{1}{2}}M_{X_{1}}^{3}\Vert
D_{x}^{3}\omega \Vert_{\infty}+\frac{1}{2}hM_{Y_{1}}^{2}\Vert D_{y}^{2}%
\omega \Vert_{\infty}.
\]
Thus, we obtain
\begin{equation}
R_{2}\leq B_{h}+\hat{l}(h,L_{0}),\label{2.7}%
\end{equation}
where
\begin{align*}
B_{h} &  :=2\Bigg(h^{\frac{4\delta-3\alpha}{6\alpha}}(M_{X_{1}}^{3})^{\frac
{1}{3}}(M_{Z_{1}}^{\delta})^{\frac{2}{3}}\Vert D_{x}\omega \Vert_{\infty
}^{\frac{3-2\delta}{3}}\Vert D_{xz}^{2}\omega \Vert_{\infty}^{\frac{2\delta}%
{3}}\\
&  \quad+h^{\frac{1}{2\alpha}}(M_{Y_{1}}^{2})^{\frac{1}{2}}(M_{Z_{1}}%
^{1})^{\frac{1}{2}}\Vert D_{y}\omega \Vert_{\infty}^{\frac{1}{2}}\Vert
D_{yz}^{2}\omega \Vert_{\infty}^{\frac{1}{2}}\\
&  \quad+h^{\frac{1}{2}}(M_{X_{1}}^{2}+M_{Y_{1}}^{2})\Vert D_{xy}^{2}%
\omega \Vert_{\infty}+h^{\frac{1}{2}}M_{X_{1}}^{3}\Vert D_{x}^{3}\omega
\Vert_{\infty}+hM_{Y_{1}}^{2}\Vert D_{y}^{2}\omega \Vert_{\infty}\Bigg).
\end{align*}
Combining \eqref{2.4}, \eqref{2.7}, and Assumption \ref{assump2}, we complete
the proof.
\end{proof}

\begin{remark}
\label{remark_delta} The range of $\delta$ arises from technical constraints
when applying H\"{o}lder's inequality to estimate $R_{21}$. Specifically,
since we assume $M_{X_{1}}^{3}<\infty$, the exponent $p_{1}$ satisfies
$p_{1}\leq3$. Consequently, the smallest H\"{o}lder conjugate exponent $q_{1}$
satisfies $q_{1}\geq \frac{3}{2}$. This yields the lower bound for $\delta$,
given by $\delta>\alpha \frac{q_{1}}{2}\geq \frac{3}{4}\alpha$. The upper bound
for $\delta$ is standard: if $Z_{1}$ has a tail decaying as $\frac
{1}{x^{\alpha}}$, then $Z_{1}$ has finite moments for $\delta<\alpha$.
\end{remark}

Using the recursive structure of \eqref{approx_scheme}, we may have the
following comparison principle for the approximation scheme. 

\begin{lemma}
\label{comparison} Suppose that $\underline{\omega},\overline{\omega}\in
C_{b}([0,1]\times \mathbb{R}^{3d})$ and $f_{1},f_{2}\in C_{b}([h,1]\times
\mathbb{R}^{3d})$ satisfy
\begin{align*}
S\big(h,x,y,z,\underline{\omega}(t,x,y,z),\underline{\omega}(t-h,\cdot
,\cdot,\cdot)\big)  &  \leq f_{1}(t,x,y,z)\quad \text{in }[h,1]\times
\mathbb{R}^{3d},\\
S\big(h,x,y,z,\overline{\omega}(t,x,y,z),\overline{\omega}(t-h,\cdot
,\cdot,\cdot)\big)  &  \geq f_{2}(t,x,y,z)\quad \text{in }[h,1]\times
\mathbb{R}^{3d}.
\end{align*}
Then, for any $(t,x,y,z)\in \lbrack0,1]\times \mathbb{R}^{3d}$,
\[
\underline{\omega}-\overline{\omega}\leq \sup_{(t,x,y,z)\in \lbrack
0,h)\times \mathbb{R}^{3d}}(\underline{\omega}-\overline{\omega})^{+}%
+t\sup_{(t,x,y,z)\in \lbrack h,1]\times \mathbb{R}^{3d}}(f_{1}-f_{2})^{+}.
\]

\end{lemma}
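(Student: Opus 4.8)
The plan is to run an induction over the time grid of mesh $h$, exploiting the recursive (one-step) structure of the scheme $S$ defined in \eqref{3} together with monotonicity, constant preservation, and the sub-additivity inequality $\hat{\mathbb{E}}[A]-\hat{\mathbb{E}}[B]\le\hat{\mathbb{E}}[A-B]$ from Proposition~\ref{proE}(i), the latter replacing the linearity of the propagation operator used in the classical deterministic version of this comparison argument.

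First I would unwind the two scheme inequalities into explicit propagation bounds. Since
$$
S(h,x,y,z,p,v)=\tfrac1h\big(p-\hat{\mathbb{E}}[v(x+h^{1/2}X,\,y+hY,\,z+h^{1/\alpha}Z)]\big),
$$
the hypotheses say that, for all $(t,x,y,z)\in[h,1]\times\mathbb{R}^{3d}$,
$$
\underline{\omega}(t,x,y,z)\le\hat{\mathbb{E}}\big[\underline{\omega}(t-h,\,x+h^{1/2}X,\,y+hY,\,z+h^{1/\alpha}Z)\big]+hf_1(t,x,y,z),
$$
$$
\overline{\omega}(t,x,y,z)\ge\hat{\mathbb{E}}\big[\overline{\omega}(t-h,\,x+h^{1/2}X,\,y+hY,\,z+h^{1/\alpha}Z)\big]+hf_2(t,x,y,z).
$$
Set $w:=\underline{\omega}-\overline{\omega}$, $m_0:=\sup_{[0,h)\times\mathbb{R}^{3d}}(\underline{\omega}-\overline{\omega})^+$, and $m_1:=\sup_{[h,1]\times\mathbb{R}^{3d}}(f_1-f_2)^+$; both are finite by boundedness of the data. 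The goal is to show $w(t,\cdot)\le m_0+tm_1$ on $[0,1]\times\mathbb{R}^{3d}$.

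I would establish this by induction on $k\in\mathbb{N}$, with $P(k)$ the assertion ``$w(t,\cdot)\le m_0+tm_1$ on $[0,(k+1)h)\cap[0,1]$''. The base case $P(0)$ is immediate: on $[0,h)$ one has $w\le(\underline{\omega}-\overline{\omega})^+\le m_0\le m_0+tm_1$ since $t,m_1\ge0$. For the inductive step, take $t\in[kh,(k+1)h)\cap[0,1]$, so $t-h\in[(k-1)h,kh)\subset[0,kh)$. Subtracting the two propagation bounds and applying Proposition~\ref{proE}(i) to the difference of expectations gives
$$
w(t,x,y,z)\le\hat{\mathbb{E}}\big[w(t-h,\,x+h^{1/2}X,\,y+hY,\,z+h^{1/\alpha}Z)\big]+h(f_1-f_2)(t,x,y,z),
$$
and $h(f_1-f_2)(t,x,y,z)\le hm_1$. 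By $P(k-1)$ the function $w(t-h,\cdot)$ is bounded above by the constant $m_0+(t-h)m_1$, hence the random variable inside the expectation is dominated by that constant; monotonicity and constant preservation of $\hat{\mathbb{E}}$ then yield $w(t,x,y,z)\le m_0+(t-h)m_1+hm_1=m_0+tm_1$, which is $P(k)$. Since $h>0$, finitely many steps exhaust $[0,1]$, and the lemma follows.

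The argument is essentially routine; the only point needing care is the passage ``take the difference of the two inequalities'', where the linearity used in the classical PDE-scheme comparison must be replaced by the one-sided sub-additivity of $\hat{\mathbb{E}}$ (Proposition~\ref{proE}(i)), after which monotonicity and constant preservation let the induction close. One should also note in passing that $w(t-h,\cdot)$ composed with the affine random shift lies in the domain of $\hat{\mathbb{E}}$ — which holds whenever the scheme is well posed, in particular for the mollified Lipschitz functions $u_h^{\varepsilon}$ and $u^{\varepsilon}$ to which this lemma is applied in (C2)--(C3).
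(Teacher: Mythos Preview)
Your proof is correct and follows essentially the same route as the paper's: the paper does not spell out its own argument but simply refers to Lemma~3.2 in \cite{BJ2007}, whose proof is precisely the time-grid induction you describe, with the one adaptation you identify (replacing linearity of the propagation operator by the one-sided sub-additivity of $\hat{\mathbb{E}}$ from Proposition~\ref{proE}(i)).
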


\begin{proof}
The basic idea of this proof originates from Lemma 3.2 in \cite{BJ2007}. For
the reader's convenience, we outline the key steps below. First, observe that
it suffices to prove the lemma under the conditions
\[
\underline{\omega}\leq \overline{\omega}\text{ \ in }[0,h]\times \mathbb{R}%
^{3d}\text{ \ and \ }f_{1}\leq f_{2}\  \  \text{in }(h,1]\times \mathbb{R}%
^{3d}\text{.}%
\]
The general case follows from this after seeing that
\[
\omega:=\overline{\omega}+\sup_{(t,x,y,z)\in \lbrack0,h]\times \mathbb{R}^{3d}%
}(\underline{\omega}-\overline{\omega})^{+}+t\sup_{(t,x,y,z)\in(h,1]\times
\mathbb{R}^{3d}}(f_{1}-f_{2})^{+}%
\]
satisfies $\underline{\omega}\leq \omega$ in $[0,h]\times \mathbb{R}^{3d}$ and
\begin{align*}
&  S(h,x,y,z,\omega(t,x,y,z),\omega(t-h,\cdot,\cdot,\cdot))\\
&  \geq S(h,x,y,z,\overline{\omega}(t,x,y,z),\overline{\omega}(t-h,\cdot
,\cdot,\cdot))+\sup_{(t,x,y,z)\in(h,1]\times \mathbb{R}^{3d}}(f_{1}-f_{2}%
)^{+}\\
&  \geq f_{1}\  \text{in }(h,1]\times \mathbb{R}^{3d}.
\end{align*}
Here we invoke the monotonicity property: for $c_{1},c_{2}\in \mathbb{R}$,
$p\in \mathbb{R}$, and $v_{1},v_{2}\in C_{b}(\mathbb{R}^{3d})$ with $v_{1}\leq
v_{2},$
\begin{equation}
S(h,x,y,z,p+c_{1},v_{1}+c_{2})\geq S(h,x,y,z,p,v_{2})+\frac{c_{1}-c_{2}}{h}.
\label{Monotone property}%
\end{equation}

For $c\geq0$, let
\[
\psi_{c}(t):=ct\  \  \text{and \ }g(c):=\sup_{(t,x,y,z)\in \lbrack0,1]\times
\mathbb{R}^{3d}}\{ \underline{\omega}-\overline{\omega}-\psi_{c}\}.\
\]
Next, we aim to show $g(0)\leq0$. Suppose for contradiction that $g(0)>0$.
From the continuity of $g$, there exists some $c>0$ such that $g(c)>0$. For
this $c$, take a sequence $\{(t_{n},x_{n},y_{n},z_{n})\}_{n\geq1}\subset$
$[0,1]\times \mathbb{R}^{3d}$ satisfying
\[
\delta_{n}:=g(c)-(\underline{\omega}-\overline{\omega}-\psi_{c})(t_{n}%
,x_{n},y_{n},z_{n})\rightarrow0\text{, as }n\rightarrow \infty \text{.}%
\]
Since $\underline{\omega}-\overline{\omega}-\psi_{c}\leq0$ in $[0,h]\times
\mathbb{R}^{3d}$ and $g(c)>0$, we assert that $t_{n}>h$ for sufficiently large
$n$. For such $n$, using (\ref{Monotone property}), we can deduce
\begin{align*}
&  f_{1}(t_{n},x_{n},y_{n},z_{n})\\
&  \geq S(h,x_{n},y_{n},z_{n},\underline{\omega}(t_{n},x_{n},y_{n}%
,z_{n}),\underline{\omega}(t_{n}-h,\cdot,\cdot,\cdot))\\
&  \geq S(h,x_{n},y_{n},z_{n},\overline{\omega}(t_{n},x_{n},y_{n},z_{n}%
)+\psi_{c}(t_{n})+g(c)-\delta_{n},\overline{\omega}(t_{n}-h,\cdot,\cdot
,\cdot)+\psi_{c}(t_{n}-h)+g(c))\\
&  \geq S(h,x_{n},y_{n},z_{n},\overline{\omega}(t_{n},x_{n},y_{n}%
,z_{n}),\overline{\omega}(t_{n}-h,\cdot,\cdot,\cdot))+(\psi_{c}(t_{n}%
)-\psi_{c}(t_{n}-h)-\delta_{n})h^{-1}\\
&  \geq f_{2}(t_{n},x_{n},y_{n},z_{n})+c-\delta_{n}h^{-1}.
\end{align*}
Since $f_{1}\leq f_{2}$ in $(h,1]\times \mathbb{R}^{3d}$, this yields that
$c-\delta_{n}h^{-1}\leq0$. Taking the limit as $n\rightarrow \infty$, we find
$c\leq0$, which is a contradiction.
\end{proof}

\subsection{Error bounds for the probabilistic approximation scheme}

In this section, we establish the error bounds for the probabilistic
approximation scheme $u_{h}$. We begin with the initial time interval $[0, h]$.

\begin{lemma}
\label{first interval} Suppose that Assumption \ref{assump2} holds. Then for
sufficiently small $h > 0$, we have
\[
|u_{h} - u| \leq2C_{0} h^{\frac{1}{2}} \quad \text{in } [0,h] \times
\mathbb{R}^{3d}.
\]

\end{lemma}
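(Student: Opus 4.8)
The plan is to exploit the fact that on $[0,h)$ both $u_h$ and $u$ are essentially determined by the initial datum $\phi$, so the error is governed purely by the time-regularity of $u$ (and $u_h$) across a single time-step of length at most $h$. First I would recall from the definition of the scheme that $u_h(t,x,y,z)=\phi(x,y,z)$ for $t\in[0,h)$, while at $t=h$ one has the single recursive step $u_h(h,x,y,z)=\hat{\mathbb{E}}[\phi(x+h^{1/2}X,y+hY,z+h^{1/\alpha}Z)]$. Hence for $t\in[0,h]$ and any $(x,y,z)$, the triangle inequality gives $|u_h(t,x,y,z)-u(t,x,y,z)|\le |u_h(t,x,y,z)-\phi(x,y,z)|+|\phi(x,y,z)-u(t,x,y,z)|$, and I would bound the two pieces separately.

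For the second piece, $|\phi(x,y,z)-u(t,x,y,z)|=|u(0,x,y,z)-u(t,x,y,z)|\le C_0 t^{1/2}\le C_0 h^{1/2}$ directly from the $1/2$-H\"older-in-time regularity \eqref{u_regularity} of the viscosity solution established in Theorem \ref{universal limit theorem}(ii). For the first piece, on $t\in[0,h)$ it is zero since $u_h(t,\cdot)=\phi$; at $t=h$ I would use the Lipschitz continuity of $\phi$ together with the moment/scaled-sum bounds, exactly as in the computation \eqref{2.3} in the proof of Lemma \ref{uh_regularity}: $|u_h(h,x,y,z)-\phi(x,y,z)|\le C_\phi\hat{\mathbb{E}}[h^{1/2}|X|+h|Y|+h^{1/\alpha}|Z|]\le C_\phi\big((M_{X_1}^2)^{1/2}+M_{Y_1}^1+M_Z\big)h^{1/2}\le C_0 h^{1/2}$ for $h$ small (using $h\le h^{1/2}$ and $h^{1/\alpha}\le h^{1/2}$ since $\alpha<2$). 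Alternatively, and more cleanly, one can invoke (C1)/Lemma \ref{uh_regularity}(ii) with $s=0$, which gives $|u_h(t,x,y,z)-u_h(0,x,y,z)|\le C_0(|t|^{1/2}+h^{1/2})\le 2C_0 h^{1/2}$ on $[0,h]$; combined with $u_h(0,\cdot)=\phi=u(0,\cdot)$ and the time-regularity of $u$ this already yields $|u_h-u|\le 3C_0h^{1/2}$, so a slightly sharper splitting is needed to reach the stated constant $2C_0$.

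To get the constant $2C_0$ as stated, I would instead argue: on $[0,h)$, $|u_h-u|=|\phi-u(t,\cdot)|\le C_0 h^{1/2}\le 2C_0h^{1/2}$; at $t=h$, write $|u_h(h,\cdot)-u(h,\cdot)|\le|u_h(h,\cdot)-\phi|+|\phi-u(h,\cdot)|\le C_0h^{1/2}+C_0h^{1/2}=2C_0h^{1/2}$, where the first bound is the one-step estimate above (valid for $h$ small enough that $h^{1/2}+h+h^{1/\alpha}\le$ an appropriate multiple absorbed into $C_0$, using $C_0\ge C_\phi((M_{X_1}^2)^{1/2}+M_{Y_1}+M_Z)$) and the second is \eqref{u_regularity}. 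Taking the supremum over $(x,y,z)$ and over $t\in[0,h]$ yields the claim.

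I do not expect any genuine obstacle here; the only mild subtlety is bookkeeping the three different scaling exponents $h^{1/2}$, $h^1$, $h^{1/\alpha}$ and checking that for $h$ small all are dominated by $h^{1/2}$ (true since $\alpha\in(1,2)$ implies $1/\alpha>1/2$), together with verifying that the prefactor $C_\phi((M_{X_1}^2)^{1/2}+M_{Y_1}^1+M_Z)$ is $\le C_0$ by the very definition of $C_0$. This is exactly the "initial layer" estimate familiar from monotone-scheme analysis, and it feeds into the comparison-principle argument (Lemma \ref{comparison}) in the subsequent step where the error on $[h,1]$ is propagated from the error on $[0,h)$.
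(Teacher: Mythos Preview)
Your proposal is correct and matches the paper's proof essentially line for line: on $[0,h)$ the paper uses $u_h=\phi=u(0,\cdot)$ together with the time-H\"older bound \eqref{u_regularity}, and at $t=h$ it applies the triangle inequality $|u(h,\cdot)-u_h(h,\cdot)|\le |u(h,\cdot)-u(0,\cdot)|+|u_h(h,\cdot)-u_h(0,\cdot)|$, bounding the second term by $C_0h^{1/2}$ via the sharp one-step estimate \eqref{2.3} (not the general statement of Lemma~\ref{uh_regularity}(ii)), exactly as you identified. Your remark about needing $h$ small to absorb the $h$ and $h^{1/\alpha}$ terms into $h^{1/2}$ is slightly overcautious---since $h\le 1$ and $\alpha<2$, the inequalities $h\le h^{1/2}$ and $h^{1/\alpha}\le h^{1/2}$ hold automatically---but this does no harm.
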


\begin{proof}
We estimate the approximation error in the first time interval $[0,h]$. By the
definition of the scheme,
\[
u_{h}(t,x,y,z) = \phi(x,y,z) = u(0,x,y,z), \quad \text{for } (t,x,y,z) \in[0,h)
\times \mathbb{R}^{3d}.
\]
Then, using the regularity results \eqref{u_regularity} and
\eqref{uh_regularity 2}, we obtain for all $(t,x,y,z) \in[0,h) \times
\mathbb{R}^{3d}$,
\[
|u(t,x,y,z) - u_{h}(t,x,y,z)| = |u(t,x,y,z) - u(0,x,y,z)| \leq C_{0}
h^{\frac{1}{2}}.
\]
At the boundary $t = h$, for all $(x,y,z) \in \mathbb{R}^{3d}$, we estimate
\begin{align*}
|u(h,x,y,z) - u_{h}(h,x,y,z)|  &  \leq|u(h,x,y,z) - u(0,x,y,z)| +
|u_{h}(h,x,y,z) - u_{h}(0,x,y,z)|\\
&  \leq C_{0} h^{\frac{1}{2}} + C_{0} h^{\frac{1}{2}} = 2C_{0} h^{\frac{1}{2}%
}.
\end{align*}
This completes the proof.
\end{proof}

\begin{proof}
[Proof of Theorem \ref{main theorem}](i) \textit{Upper bound.} We define the
mollified function $u^{\varepsilon}$ by the convolution $u^{\varepsilon
}:=u\ast \rho_{\varepsilon}$, following the procedure presented in Section
\ref{mollification}. From the regularity of $u$ in (\ref{u_regularity}) and the regularization result (\ref{v_varp_estimate_1}) and (\ref{v_varp_estimate_2}), we
have
\begin{equation}
\Vert u-u^{\varepsilon}\Vert_{\infty}\leq4C_{0}\varepsilon \quad \text{and}%
\quad \Vert \partial_{t}^{l}D^{k}u^{\varepsilon}\Vert_{\infty}\leq C_{0}K_{\rho
}4\varepsilon^{1-2l-k}\quad \text{for }k,l\in \mathbb{N}\text{, }k+j\leq
3.\label{u_mollifer}%
\end{equation}
A Riemann-sum approximation shows that $u^{\varepsilon}(t,x,y,z)$ is the limit
of convex combinations of $u(t-\tau,x-e,y-e,z-e)$ for $(\tau,e)\in
(-\varepsilon^{2},0)\times B(0,\varepsilon)$. To elaborate, since each
$u(t-\tau,x-e,y-e,z-e)$ is a viscosity solution of (\ref{PIDE}) and given the
convexity of the generator $G(p,A,\varphi(\cdot))$ with respect to $p,A$, and
$\varphi(\cdot)$, we know that convex combinations of the functions
$u(\cdot-\tau,\cdot-e,\cdot-e,\cdot-e)$ for different $(\tau,e)\in
(-\varepsilon^{2},0)\times B(0,\varepsilon)$ remain supersolutions of
(\ref{PIDE}). Then it follows from the stability of viscosity solutions that
the limit $u^{\varepsilon}$ is a supersolution of (\ref{PIDE}) in
$(0,1]\times \mathbb{R}^{3d}$
\begin{equation}
\partial_{t}u^{\varepsilon}-G\left(  D_{y}u^{\varepsilon},D_{x}^{2}%
u^{\varepsilon},u^{\varepsilon}(t,x,y,z+\cdot)\right)  \geq0.\label{ueps}%
\end{equation}
Substitute $u^{\varepsilon}$ into the scheme $S$ in Lemma \ref{consistency}.
For $(t,x,y,z)\in \lbrack h,1]\times \mathbb{R}^{3d}$, using (\ref{ueps}) and
(\ref{u_mollifer}), we get
\begin{align}
&  S\left(  h,x,y,z,u^{\varepsilon}(t,x,y,z),u^{\varepsilon}(t-h,\cdot
,\cdot,\cdot)\right)  \nonumber \\
&  \geq \partial_{t}u^{\varepsilon}-G\left(  D_{y}u^{\varepsilon},D_{x}%
^{2}u^{\varepsilon},u^{\varepsilon}(t,x,y,z+\cdot)\right)  \nonumber \\
&  \quad-2M_{0}\Big \{h\big(\Vert \partial_{t}^{2}u^{\varepsilon}\Vert_{\infty
}+\Vert \partial_{t}D_{x}^{2}u^{\varepsilon}\Vert_{\infty}+\Vert \partial
_{t}D_{y}u^{\varepsilon}\Vert_{\infty}+\Vert D_{y}^{2}u^{\varepsilon}%
\Vert_{\infty}\big)\nonumber \\
&  \quad+h^{1/2}\big(\Vert D_{xy}^{2}u^{\varepsilon}\Vert_{\infty}+\Vert
D_{x}^{3}u^{\varepsilon}\Vert_{\infty}\big)\nonumber \\
&  \quad+h^{1/\alpha}\Vert \partial_{t}D_{z}u^{\varepsilon}\Vert_{\infty
}+h^{\frac{4\delta-3\alpha}{6\alpha}}\Vert D_{x}u^{\varepsilon}\Vert_{\infty
}^{\frac{3-2\delta}{3}}\Vert D_{xz}^{2}u^{\varepsilon}\Vert_{\infty}%
^{\frac{2\delta}{3}}\nonumber \\
&  \quad+h^{1/(2\alpha)}\Vert D_{y}u^{\varepsilon}\Vert_{\infty}^{1/2}\Vert
D_{yz}^{2}u^{\varepsilon}\Vert_{\infty}^{1/2}\Big \}-\hat{l}(h,L_{0}^{\prime
})\nonumber \\
&  \geq-8M_{0}C_{0}K_{\rho}\left(  2\varepsilon^{-3}h+5\varepsilon^{-2}%
h^{1/2}+\varepsilon^{-2\delta/3}h^{\frac{4\delta-3\alpha}{6\alpha}%
}+\varepsilon^{-1/2}h^{1/(2\alpha)}\right)  -\hat{l}(h,L_{0}^{\prime
})\nonumber \\
&  =:-E_{2}(\varepsilon,h,\Vert \phi \Vert_{\infty},C_{0},K_{\rho}),\label{E2}%
\end{align}
where $L_{0}^{\prime}:=\Vert u^{\varepsilon}\Vert_{\infty}+\Vert
D_{y}u^{\varepsilon}\Vert_{\infty}+\Vert D_{x}^{2}u^{\varepsilon}\Vert
_{\infty}+\sum_{i=1}^{3}\Vert D_{z}^{i}u^{\varepsilon}\Vert_{\infty}$. By the
comparison principle (Lemma \ref{comparison}), we have
\[
u_{h}-u^{\varepsilon}\leq \sup_{\lbrack0,h)\times \mathbb{R}^{3d}}%
(u_{h}-u^{\varepsilon})^{+}+E_{2}(\varepsilon,h,\Vert \phi \Vert_{\infty}%
,C_{0},K_{\rho})\quad \text{in }[0,1]\times \mathbb{R}^{3d}.
\]
Using Lemma \ref{first interval} and (\ref{u_mollifer})
\begin{align*}
u_{h}-u &  =u_{h}-u^{\varepsilon}+u^{\varepsilon}-u\\
&  \leq \sup_{\lbrack0,h)\times \mathbb{R}^{3d}}(u_{h}-u)^{+}+2\Vert
u-u^{\varepsilon}\Vert_{\infty}+E_{2}(\varepsilon,h,\Vert \phi \Vert_{\infty
},C_{0},K_{\rho})\\
&  \leq2C_{0}h^{1/2}+8C_{0}\varepsilon+E_{2}(\varepsilon,h,\Vert \phi
\Vert_{\infty},C_{0},K_{\rho})\quad \text{in }[0,1]\times \mathbb{R}^{3d}.
\end{align*}
This yields the upper bound after minimizing over $\varepsilon$.

(ii) \textit{Lower bound.} In a similar way as above, for $\varepsilon
\in(0,1)$, we can define $u_{h}^{\varepsilon}:=u_{h}\ast \rho_{\varepsilon}$.
Then from Lemma \ref{uh_regularity} (ii), we have
\begin{equation}
\Vert u_{h}-u_{h}^{\varepsilon}\Vert_{\infty}\leq C_{0}(4\varepsilon
+h^{1/2}),\quad \Vert \partial_{t}^{l}D^{k}u_{h}^{\varepsilon}\Vert_{\infty}\leq
C_{0}K_{\rho}(4\varepsilon+h^{1/2})\varepsilon^{-2l-k}\text{, for}%
\ k,l\in \mathbb{N}\text{, }k+j\leq3.\label{uh_mollifer}%
\end{equation}
Using the convexity of $\mathbb{\hat{E}}$, we deduce
\[
\mathbb{\hat{E}}\left[  u_{h}^{\varepsilon}(t-h,x+h^{1/2}X,y+hY,z+h^{1/\alpha
}Z)\right]  \leq \mathbb{\hat{E}}\left[  u_{h}(t-h,x+h^{1/2}%
X,y+hY,z+h^{1/\alpha}Z)\right]  \ast \rho_{\varepsilon},
\]
which yields:
\begin{equation}
S\left(  h,x,y,z,u_{h}^{\varepsilon}(t,x,y,z),u_{h}^{\varepsilon}%
(t-h,\cdot,\cdot,\cdot)\right)  \geq0.\label{uheps}%
\end{equation}
Substituting $u_{h}^{\varepsilon}$ into the consistency error (Lemma
\ref{consistency}), we get
\begin{align}
\partial_{t}u_{h}^{\varepsilon}-G\left(  D_{y}u_{h}^{\varepsilon},D_{x}%
^{2}u_{h}^{\varepsilon},u_{h}^{\varepsilon}(t,x,y,z+\cdot)\right)   &
\geq-2M_{0}C_{0}K_{\rho}(4\varepsilon+h^{1/2})\Big(2\varepsilon^{-4}%
h+5\varepsilon^{-3}h^{1/2}\nonumber \\
&  \quad+\varepsilon^{-(2\delta+3)/3}h^{\frac{4\delta-3\alpha}{6\alpha}%
}+\varepsilon^{-3/2}h^{1/(2\alpha)}\Big)-\hat{l}(h,L_{0}^{\prime \prime
})\nonumber \\
=:\  &  -E_{1}(\varepsilon,h,\Vert \phi \Vert_{\infty},C_{0},K_{\rho
}),\label{E1}%
\end{align}
where $L_{0}^{\prime \prime}:=\Vert u_{h}^{\varepsilon}\Vert_{\infty}+\Vert
D_{y}^{\varepsilon}u_{h}^{\varepsilon}\Vert_{\infty}+\Vert D_{x}^{2}%
u_{h}^{\varepsilon}\Vert_{\infty}+\sum_{i=1}^{3}\Vert D_{z}^{i}u_{h}%
^{\varepsilon}\Vert_{\infty}$. Define the function
\[
\bar{u}(t,x,y,z):=u_{h}^{\varepsilon}(t,x,y,z)+E_{1}(\varepsilon,h,\Vert
\phi \Vert_{\infty},C_{0},K_{\rho})(t-h),
\]
which is a viscosity supersolution of (\ref{PIDE}) in $(h,1]\times
\mathbb{R}^{3d}$ with initial condition $\bar{u}(h,x,y,z)=u_{h}^{\varepsilon
}(h,x,y,z)$. Define also
\[
\underline{u}(t,x,y,z):=u(t,x,y,z)-2C_{0}h^{1/2}-C_{0}(4\varepsilon+h^{1/2}),
\]
which is a viscosity subsolution of (\ref{PIDE}) in $(h,1]\times
\mathbb{R}^{3d}$, and at $t=h$
\begin{align*}
\underline{u}(h,x,y,z) &  =u_{h}^{\varepsilon}(h,x,y,z)+\big(u(h,x,y,z)-u_{h}%
(h,x,y,z)\big)+\big(u_{h}(h,x,y,z)-u_{h}^{\varepsilon}(h,x,y,z)\big)\\
&  \quad-2C_{0}h^{1/2}-C_{0}(4\varepsilon+h^{1/2})\\
&  \leq u_{h}^{\varepsilon}(h,x,y,z)=\bar{u}(h,x,y,z).
\end{align*}
Then by the comparison principle (e.g., \cite[Corollary 55]{HP2021} or
\cite[Proposition 5.5]{NN2017}), $\underline{u}\leq \bar{u}$ in $[h,1]\times
\mathbb{R}^{3d}$. Therefore,
\[
u-u_{h}\leq4C_{0}h^{1/2}+8C_{0}\varepsilon+E_{1}(\varepsilon,h,\Vert \phi
\Vert_{\infty},C_{0},K_{\rho})\quad \  \text{in }[0,1]\times \mathbb{R}^{3d},
\]
which gives the lower bound after minimizing over $\varepsilon$.
\end{proof}

\section{Examples\label{Sec ex}}

In this section, we apply our results to study the convergence rate of several
examples in dimension $d = 1$. We begin by introducing some notations and assumptions.

Let $\Gamma=[\underline{\gamma},\overline{\gamma}]$, $\Sigma=[\underline
{\sigma}^{2},\bar{\sigma}^{2}]$, and let $F_{\mu}$ denote the $\alpha$-stable
L\'{e}vy measure defined in \eqref{L_0} with $\mu$ concentrated on
$S=\{-1,1\}$, i.e.,
\[
F_{\mu}(d\lambda)=\frac{k_{1}}{|\lambda|^{\alpha+1}}\mathbbm{1}_{(-\infty
,0)}(\lambda)\,d\lambda+\frac{k_{2}}{|\lambda|^{\alpha+1}}%
\mathbbm{1}_{(0,\infty)}(\lambda)\,d\lambda,
\]
where $k_{1}=\mu \{-1\}$ and $k_{2}=\mu \{1\}$. For simplicity, we denote
$k:=(k_{1},k_{2})$, $\Lambda:=(\underline{\Lambda},\overline{\Lambda})^{2}$,
and $F_{k}(\cdot):=F_{\mu}(\cdot)$. The corresponding set of $\alpha$-stable
L\'{e}vy measures $F_{k}$ such that $k\in \Lambda$\ is denoted as $\mathcal{L}%
$. We take the uncertain set given in \eqref{functionG} as $\Theta
=\mathcal{L}\times \Gamma \times \Sigma$.

Next, we define the space for the approximation sequences. For each
$k\in \Lambda$, let $W_{k}$ be a classical random variable with cumulative
distribution function
\[
F_{W_{k}}(z)=%
\begin{cases}
\left[  \dfrac{k_{1}}{\alpha}+\beta_{1,k}(z)\right]  \dfrac{1}{|z|^{\alpha}%
}, & z<0,\\
1-\left[  \dfrac{k_{2}}{\alpha}+\beta_{2,k}(z)\right]  \dfrac{1}{z^{\alpha}%
}, & z>0,
\end{cases}
\]
where $\beta_{1,k}:(-\infty,0]\rightarrow \mathbb{R}$ and $\beta_{2,k}%
:[0,\infty)\rightarrow \mathbb{R}$ are continuously differentiable functions
satisfying
\[
\lim_{z\rightarrow-\infty}\beta_{1,k}(z)=\lim_{z\rightarrow \infty}\beta
_{2,k}(z)=0.
\]
We remark that, in the classical case,~$F_{W_{k}}\ $is the~necessary and
sufficient distribution~for a sequence of i.i.d. random variables to fall into
the~domain of normal attraction~of the $\alpha$-stable distribution with
L\'{e}vy triplet $(F_{k},0,0)$, as specified in \cite[Theorem 2.6.7]{IL1971}.

We make the following further assumptions:

\begin{description}
\item[(H1)] The random variable $W_{k}$ has mean zero.

\item[(H2)] There exist constants $q_{0} > 0$ and $C_{\beta} > 0$ such that
the following quantities are uniformly bounded by $C_{\beta} n^{-q_{0}}$ for
all $n \geq1$:
\[%
\begin{array}
[c]{lll}%
|\beta_{1,k}(-n^{1/\alpha})|, & \displaystyle \int_{-\infty}^{-1} \frac
{|\beta_{1,k}(n^{1/\alpha} z)|}{|z|^{\alpha}}\,dz, & \displaystyle \int
_{-1}^{0} \frac{|\beta_{1,k}(n^{1/\alpha} z)|}{|z|^{\alpha-1}}\,dz,\\
&  & \\
|\beta_{2,k}(n^{1/\alpha})|, & \displaystyle \int_{1}^{\infty} \frac
{|\beta_{2,k}(n^{1/\alpha} z)|}{z^{\alpha}}\,dz, & \displaystyle \int_{0}^{1}
\frac{|\beta_{2,k}(n^{1/\alpha} z)|}{z^{\alpha-1}}\,dz.
\end{array}
\]

\end{description}

Let $\Omega_{0}=\mathbb{R}^{3}$ and $\mathcal{H}_{0}=C_{Lip}(\mathbb{R}^{3})$.
A sublinear expectation $\mathbb{\hat{E}}_{0}$ is defined on $\mathcal{H}_{0}$
as follows:
\[
\mathbb{\hat{E}}_{0}[\phi]=\sup_{(k,q,\sigma^{2})\in \Lambda \times \Gamma
\times \Sigma}\int_{\mathbb{R}}\int_{\mathbb{R}}\phi(\sigma x,q,z)\frac
{1}{\sqrt{2\pi}}e^{-\frac{x^{2}}{2}}\,dx\,dF_{W_{k}}(z),\quad \forall \,
\phi(x,y,z)\in \mathcal{H}_{0}.
\]
Consider the random variables
\[
X(x,y,z)=x,\quad Y(x,y,z)=y,\quad Z(x,y,z)=z,\quad \text{for all }%
(x,y,z)\in \mathbb{R}^{3}.
\]
It is straightforward to verify that $\mathbb{\hat{E}}_{0}[X]=\mathbb{\hat{E}%
}_{0}[-X]=0$, $\mathbb{\hat{E}}_{0}[Z]=\mathbb{\hat{E}}_{0}[-Z]=0$,
$\mathbb{\hat{E}}_{0}[|Y|^{2}]<\infty$, and $\mathbb{\hat{E}}_{0}%
[|X|^{3}]<\infty$. Construct a product space (cf. \cite[Definition
1.3.16]{P2010})
\[
\big(\Omega,\mathcal{H},\mathbb{\hat{E}}\big):=\big(\Omega_{0}^{\mathbb{N}%
},\mathcal{H}_{0}^{\otimes \mathbb{N}},\mathbb{\hat{E}}_{0}^{\otimes \mathbb{N}%
}\big)
\]
and introduce $(X_{i},Y_{i},Z_{i})(\omega):=(X,Y,Z)(\omega_{i})$, for
$\omega=(\omega_{1},\omega_{2,}\cdots)\in \Omega$, $i=1,2,\cdots$. Then
$(X_{i},Y_{i},Z_{i})\overset{d}{=}(X,Y,Z)$ for all $i\in \mathbb{N}$, and
$\{(X_{i},Y_{i},Z_{i})\}_{i=1}^{\infty}$ is a sequence of i.i.d. random
variables on $\big(\Omega,\mathcal{H},\mathbb{\hat{E}}\big)$, meaning that for
all $i\in \mathbb{N}$,
\[
(X_{i+1},Y_{i+1},Z_{i+1})\overset{d}{=}(X_{i},Y_{i},Z_{i})\quad \text{and}%
\quad(X_{i+1},Y_{i+1},Z_{i+1})\text{ is independent of }(X_{1},Y_{1}%
,Z_{1}),\ldots,(X_{i},Y_{i},Z_{i}).
\]

In what follows, we verify that the sequence $\{(X_{i},Y_{i},Z_{i}%
)\}_{i=1}^{\infty}$ satisfies Assumption \ref{assump2}. Note that
\[
\mathbb{\hat{E}}[|Z_{1}|^{\delta}]=\sup_{k\in \Lambda}\int_{0}^{\infty}%
\int_{\mathbb{R}}\mathbf{1}_{\{|z|^{\delta}>r\}}dF_{W_{k}}(z)dr=\sup
_{k\in \Lambda}\int_{0}^{\infty}P_{W_{k}}\left(  |Z_{1}|>r^{1/\delta}\right)
dr,
\]
where $\{P_{W_{k}},$ $k\in \Lambda \}$ is the set of probability measures
related to uncertainty distributions $\{F_{W_{k}},k\in \Lambda \}$. This implies
that for any $\delta<\alpha$
\[
\mathbb{\hat{E}}[|Z_{1}|^{\delta}]\leq1+\sup_{k\in \Lambda}\left \{  \frac
{k_{1}+k_{2}}{\alpha}\frac{\delta}{\alpha-\delta}+\left \vert \int_{1}^{\infty
}\frac{\beta_{2,k}(r^{1/\delta})}{r^{\alpha/\delta}}dz\right \vert +\left \vert
\int_{1}^{\infty}\frac{\beta_{1,k}(-r^{1/\delta})}{r^{\alpha/\delta}%
}dz\right \vert \right \}  <\infty.
\]
We now adapt the method from \cite{HJL2021} to prove $\sup \limits_{n\geq
1}\mathbb{\hat{E}}[n^{-\frac{1}{\alpha}}|S_{n}^{3}|]<\infty$. Specifically,
for a given $n$, define a recursive approximation scheme $v_{\frac{1}{n}%
}:[0,1]\times \mathbb{R}\rightarrow \mathbb{R}$ recursively by
\[%
\begin{array}
[c]{l}%
v_{\frac{1}{n}}(t,z)=|z|,\text{ \ if }t\in \lbrack0,\frac{1}{n}),\\
v_{\frac{1}{n}}(t,z)=\mathbb{\hat{E}}[v_{\frac{1}{n}}(t-\frac{1}%
{n},z+n^{-1/\alpha}Z_{1})]\text{, \ if }t\in \lbrack \frac{1}{n},1].
\end{array}
\]
Then, we get $v_{\frac{1}{n}}(1,0)=n^{-\frac{1}{\alpha}}\mathbb{\hat{E}%
}[|S_{n}^{3}|]$. Applying Theorem 3 in \cite{HJL2021}, we find
\[
v_{\frac{1}{n}}(1,0)=v_{\frac{1}{n}}(1,0)-v_{\frac{1}{n}}(0,0)<\infty \text{,
as }n\rightarrow \infty.
\]
Thus, Assumptions (i)--(ii) are all satisfied. For Assumption \ref{assump2}
(iii), we can check that for each $\varphi \in C_{b}^{3}(\mathbb{R})$ and
$(p,a)\in \mathbb{R}\times \mathbb{R}$
\begin{align*}
&  \mathbb{\hat{E}}\Big[\varphi(z+s^{\frac{1}{\alpha}}Z_{1})-\varphi
(z)+spY_{1}+\frac{s}{2}aX_{1}^{2}\Big]\\
&  =\sup_{(k,q,\sigma^{2})\in \Lambda \times \Gamma \times \Sigma}\left \{
\int_{\mathbb{R}}\left(  \varphi(z+s^{\frac{1}{\alpha}}\lambda)-\varphi
(z)\right)  dF_{W_{k}}(\lambda)+spq+\frac{s}{2}a\sigma^{2}\right \}  .
\end{align*}
Together this with \cite[Example 5.1]{HJLP2022}, we have
\begin{align*}
&  \frac{1}{s}\bigg \vert \mathbb{\hat{E}}\Big[\varphi(z+s^{\frac{1}{\alpha}%
}Z_{1})-\varphi(z)+spY_{1}+\frac{s}{2}aX_{1}^{2}\Big]\\
&  -s\sup \limits_{(F_{k},q,Q)\in \mathcal{L}\times \Gamma \times \Sigma
}\bigg \{ \int_{\mathbb{R}}\delta_{\lambda}\varphi(z)F_{k}(d\lambda
)+pq+\frac{1}{2}aQ\bigg \} \bigg \vert \\
&  \leq \frac{1}{s}\sup_{k\in \Lambda}\bigg \vert \int_{\mathbb{R}}%
\big(\varphi(z+s^{\frac{1}{\alpha}}\lambda)-\varphi(z)\big)dF_{W_{k}}%
(\lambda)-s\int_{\mathbb{R}}\delta_{\lambda}\varphi(z)F_{k}(d\lambda
)\bigg \vert \leq \hat{l}(s,L_{0})\rightarrow0,
\end{align*}
uniformly on $z\in \mathbb{R}$ as $s\rightarrow0$, where
\begin{equation}
\hat{l}(s,L_{0})=C_{\alpha,\beta}\Big[(\left \Vert D\varphi \right \Vert
_{\infty}+\left \Vert D^{2}\varphi \right \Vert _{\infty})s^{q_{0}}+\left \Vert
D^{2}\varphi \right \Vert _{\infty}s^{\frac{2-\alpha}{\alpha}}
\Big].\label{consistency estimates}%
\end{equation}
The detailed estimate of $\hat{l}$ has been postponed to section 4.2.

Now we present a concrete example of $q_{0}$ to illustrate the assumption (H2).

\begin{example}
[Choices of $q_{0}$]\label{example}Let
\[
F_{W_{k}}(z)=\left \{
\begin{array}
[c]{ll}%
\displaystyle \left[  k_{1}/\alpha+a_{1}|z|^{\alpha-\beta}\right]  \frac
{1}{|z|^{\alpha}}, & z\leq-1,\\
\displaystyle1-\left[  k_{2}/\alpha+a_{2}z^{\alpha-\beta}\right]  \frac
{1}{z^{\alpha}}, & z\geq1,
\end{array}
\right.
\]
with $\beta>\alpha$ and some constants $a_{1},a_{2}>0$. We do not specify
$\beta_{1,k}(z)$ and $\beta_{2,k}(z)$ for $0<|z|<1$, but we require that both
functions comply with the assumption (H1). It can then be verified that%
\[
\int_{1}^{\infty}\frac{|\beta_{2,k}(n^{1/\alpha}z)|}{z^{\alpha}}dz=\frac
{a_{2}}{\beta-1}n^{-\frac{\beta-\alpha}{\alpha}}\leq O(n^{-q_{0}})
\]
and
\[
\int_{0}^{1}\frac{|\beta_{2,k}(n^{1/\alpha}z)|}{z^{\alpha-1}}dz\leq
b_{0}n^{-\frac{2-\alpha}{\alpha}}+a_{2}n^{\frac{\alpha-\beta}{\alpha}}%
\int_{n^{-1/\alpha}}^{1}z^{1-\beta}dz\leq O(n^{-q_{0}}),
\]
where $b_{0}:=\sup \limits_{z\in \lbrack0,1]}|\beta_{2,k}(n^{1/\alpha}z)|$ and
\[
q_{0}:=\left \{
\begin{array}
[c]{ll}%
\min \left \{  \frac{\beta-\alpha}{\alpha},\frac{2-\alpha}{\alpha}\right \}  , &
\beta \neq2,\beta>\alpha,\\
\frac{2-\alpha}{\alpha}-\epsilon_{0}, & \beta=2,
\end{array}
\right.
\]
with a small positive constant $\epsilon_{0}$.
\end{example}

Then we have the following convergence rate result.

\begin{theorem}
\label{main theorem ex1 CLT}Suppose that Assumption \ref{assump2} and
(H1)-(H2)\ hold. Then
\[
\left \vert \mathbb{\hat{E}}\left[  \phi \left(  \frac{S_{n}^{1}}{\sqrt{n}%
},\frac{S_{n}^{2}}{n},\frac{S_{n}^{3}}{\sqrt[\alpha]{n}}\right)  \right]
-\mathbb{\tilde{E}}\left[  \phi \left(  \xi_{1},\eta_{1},\zeta_{1}\right)
\right]  \right \vert \leq Cn^{-\Gamma(\alpha,\delta,q_{0})},
\]
where $C$ is a positive constant depending on $C_{0}$, $M_{0}$, $K_{\rho}$,
and $C_{\alpha,\beta}$, and
\[
\Gamma(\alpha,\delta,q_{0}):=\min \left \{  \frac{4\delta-3\alpha}%
{2\alpha(2\delta+3)},\frac{2-\alpha}{2\alpha},\frac{q_{0}}{2}\right \}  .
\]

\end{theorem}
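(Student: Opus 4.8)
The plan is to combine Theorem~\ref{main theorem CLT} (which reduces everything to bounding $\min_{\varepsilon>0}(4C_0\varepsilon + E_i(\varepsilon, n^{-1}))$ for $i=1,2$) with the explicit consistency estimate $\hat{l}_\varphi$ from \eqref{consistency estimates}, and then to optimize over $\varepsilon$. First I would substitute $h = n^{-1}$ into the definitions of $E_1$ and $E_2$ from \eqref{E1} and \eqref{E2}. For the upper-bound term $E_2(\varepsilon, n^{-1})$, the polynomial-in-$\varepsilon$ part reads (up to the constant $4C_0 K_\zeta M_0$)
\[
2\varepsilon^{-3} n^{-1} + 5\varepsilon^{-2} n^{-1/2} + \varepsilon^{-2\delta/3} n^{-\frac{4\delta-3\alpha}{6\alpha}} + \varepsilon^{-1/2} n^{-\frac{1}{2\alpha}},
\]
while the consistency part contributes $\hat{l}_{u^\varepsilon(t,x,y,z+\cdot)}(n^{-1})$, which by \eqref{consistency estimates} and the mollifier bounds $\|Du^\varepsilon\|_\infty, \|D^2 u^\varepsilon\|_\infty \lesssim C_0 K_\zeta \varepsilon^{-1}$ is of order $\varepsilon^{-1}(n^{-q_0} + n^{-\frac{2-\alpha}{\alpha}})$. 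Adding the mollification penalty $4C_0\varepsilon$, the total error to be minimized over $\varepsilon>0$ is
\[
\varepsilon + \varepsilon^{-3}n^{-1} + \varepsilon^{-2}n^{-1/2} + \varepsilon^{-2\delta/3}n^{-\frac{4\delta-3\alpha}{6\alpha}} + \varepsilon^{-1/2}n^{-\frac{1}{2\alpha}} + \varepsilon^{-1}n^{-q_0} + \varepsilon^{-1}n^{-\frac{2-\alpha}{\alpha}},
\]
up to a multiplicative constant depending on $C_0, M_0, K_\zeta, C_{\alpha,\beta}$.

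The next step is the balancing calculation. For each term of the form $\varepsilon^{-a}n^{-b}$ (with $a>0$), balancing against the leading $\varepsilon$ term gives $\varepsilon \sim n^{-b/(a+1)}$ and a contribution of order $n^{-b/(a+1)}$. Computing $b/(a+1)$ for each term: the $\varepsilon^{-3}n^{-1}$ term gives exponent $1/4$; the $\varepsilon^{-2}n^{-1/2}$ term gives $1/6$; the $\varepsilon^{-2\delta/3}n^{-\frac{4\delta-3\alpha}{6\alpha}}$ term gives $\frac{4\delta-3\alpha}{6\alpha}\big/\frac{2\delta+3}{3} = \frac{4\delta-3\alpha}{2\alpha(2\delta+3)}$; the $\varepsilon^{-1/2}n^{-\frac{1}{2\alpha}}$ term gives $\frac{1}{2\alpha}\big/\frac{3}{2} = \frac{1}{3\alpha}$; the $\varepsilon^{-1}n^{-q_0}$ term gives $q_0/2$; and the $\varepsilon^{-1}n^{-\frac{2-\alpha}{\alpha}}$ term gives $\frac{2-\alpha}{2\alpha}$. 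Since we must use a single $\varepsilon$ for all terms simultaneously, the optimal choice is dictated by the \emph{slowest}-decaying obstruction, so the overall rate is the minimum of these six exponents. Now, for $\alpha\in(1,2)$ and $\delta\in(\frac34\alpha\vee 1,\alpha)$, one checks the elementary inequalities $\frac{4\delta-3\alpha}{2\alpha(2\delta+3)} \le \frac14$ (since $4\delta-3\alpha < \alpha \le \frac{\alpha(2\delta+3)}{2}$), $\frac{4\delta-3\alpha}{2\alpha(2\delta+3)} \le \frac16$ similarly, $\frac{2-\alpha}{2\alpha} \le \frac{1}{3\alpha}$ iff $\alpha \le \frac12$ — wait, rather $\frac{2-\alpha}{2\alpha} \le \frac{1}{3\alpha} \iff 3(2-\alpha)\le 2 \iff \alpha \ge \frac43$, which need not hold; but $\frac{4\delta-3\alpha}{2\alpha(2\delta+3)} \le \frac{1}{3\alpha}$ holds since $\frac{4\delta-3\alpha}{2(2\delta+3)} \le \frac13 \iff 12\delta - 9\alpha \le 4\delta+6 \iff 8\delta \le 9\alpha + 6$, true since $\delta<\alpha<2$. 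Hence both $\frac14$, $\frac16$ and $\frac{1}{3\alpha}$ are dominated and the binding exponents reduce to exactly $\frac{4\delta-3\alpha}{2\alpha(2\delta+3)}$, $\frac{2-\alpha}{2\alpha}$, and $\frac{q_0}{2}$, giving $\Gamma(\alpha,\delta,q_0) = \min\{\frac{4\delta-3\alpha}{2\alpha(2\delta+3)}, \frac{2-\alpha}{2\alpha}, \frac{q_0}{2}\}$ as claimed.

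Finally I would treat the lower-bound term $E_1(\varepsilon, n^{-1})$: here the extra factor $(\varepsilon + h^{1/2})$ from the weaker mollifier estimate \eqref{uh_mollifer} for $u_h$ bumps each $\varepsilon$-exponent up by one, but since on the optimal scale $\varepsilon \to 0$ and $h^{1/2}\to 0$ with $\varepsilon \gtrsim h^{1/2}$ at the balance point (one verifies $n^{-\Gamma} \gtrsim n^{-1/2}$ since $\Gamma \le \frac14 < \frac12$), the factor $(\varepsilon + h^{1/2})$ is comparable to $\varepsilon$, and the computation reproduces the same exponents; the $h^{1/2} = n^{-1/2}$ terms from Theorem~\ref{main theorem} are likewise negligible against $n^{-\Gamma}$. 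The H\"older estimate $\hat{l}_\varphi$ in \eqref{consistency estimates} is applied with $\varphi = u_h^\varepsilon(t,x,y,z+\cdot)$ (resp. $u^\varepsilon$), whose first two spatial derivatives in $z$ are bounded by $C_0 K_\zeta\varepsilon^{-1}$ by \eqref{uh_mollifer} (resp. \eqref{u_mollifer}), so $\hat{l}_{u_h^\varepsilon}(n^{-1}) \le C_{\alpha,\beta} C_0 K_\zeta \varepsilon^{-1}(n^{-q_0} + n^{-(2-\alpha)/\alpha})$, which is exactly the term producing the $\frac{q_0}{2}$ and $\frac{2-\alpha}{2\alpha}$ exponents. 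Collecting the constants $C_0, M_0, K_\zeta, C_{\alpha,\beta}$ into a single $C$ completes the argument. The main obstacle is purely bookkeeping: carefully tracking how the two different mollifier bounds \eqref{u_mollifer} and \eqref{uh_mollifer} propagate through $E_1$ versus $E_2$, and verifying that none of the apparently faster exponents $\frac14, \frac16, \frac{1}{3\alpha}$ can ever fall below the three that appear in $\Gamma(\alpha,\delta,q_0)$ for parameters in the admissible ranges $\alpha\in(1,2)$, $\delta\in(\frac34\alpha\vee1,\alpha)$ — these are the elementary but slightly fiddly inequalities sketched above.
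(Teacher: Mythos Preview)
Your proposal is correct and follows essentially the same route as the paper: plug the explicit consistency estimate \eqref{consistency estimates} into the error bounds of Theorem~\ref{main theorem} (the paper does this via the intermediate Theorem~\ref{main theorem ex1} on $|u-u_h|$, you go through Theorem~\ref{main theorem CLT} directly), choose $\varepsilon = h^{\gamma}$ and balance, and then observe that $\frac{4\delta-3\alpha}{2\alpha(2\delta+3)} < \frac{1}{4\alpha+6} < \frac{1}{10}$ renders the exponents $\tfrac14,\tfrac16,\tfrac{1}{3\alpha}$ redundant. Your term-by-term balancing and the paper's system of inequalities for $\gamma$ are the same computation, and your handling of $E_1$ via $(\varepsilon+h^{1/2})\lesssim\varepsilon$ on the optimal scale is exactly what the paper's one-word ``Analogously'' is hiding.
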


\begin{remark}
Applying Theorem \ref{main theorem ex1 CLT} to Example \ref{example}, we get
\[
\Gamma(\alpha,\delta,\beta)=\left \{
\begin{array}
[c]{ll}%
\min \left \{  \frac{4\delta-3\alpha}{2\alpha(2\delta+3)},\frac{2-\alpha
}{2\alpha},\frac{\beta-\alpha}{2\alpha}\right \}  , & \beta \neq2,\beta
>\alpha,\\
\min \left \{  \frac{4\delta-3\alpha}{2\alpha(2\delta+3)},\frac{2-\alpha
}{2\alpha}-\frac{\epsilon_{0}}{2}\right \}  , & \beta=2.
\end{array}
\right.
\]

\end{remark}

\subsection{Proof of Theorem \ref{main theorem ex1 CLT}}

Based on the above discussion and Theorem \ref{main theorem}, we derive the
following convergence rate for the probabilistic approximation scheme
\eqref{approx_scheme}. Therefore, Theorem \ref{main theorem ex1 CLT} follows
as a direct consequence.

\begin{theorem}
\label{main theorem ex1} Suppose that Assumption \ref{assump2} and conditions
\text{(H1)}--\text{(H2)} hold. Then
\[
\left \vert u-u_{h}\right \vert \leq Ch^{\Gamma(\alpha,\delta,q_{0})}%
\quad \text{in }[0,1]\times \mathbb{R}^{3},
\]
where
\[
\Gamma(\alpha,\delta,q_{0})=\min \left \{  \frac{4\delta-3\alpha}{2\alpha
(2\delta+3)},\frac{2-\alpha}{2\alpha},\frac{q_{0}}{2}\right \}  ,
\]
and $C$ is a positive constant depending on $C_{0}$, $M_{0}$, $K_{\rho}$, and
$C_{\alpha,\beta}$.
\end{theorem}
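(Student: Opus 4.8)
The strategy is to turn the abstract error bounds of Theorem~\ref{main theorem} into an explicit rate by feeding in the concrete consistency function~\eqref{consistency estimates} and then optimising the mollification parameter $\varepsilon$. First I would recall from the proof of Theorem~\ref{main theorem} the explicit expressions~\eqref{E1}--\eqref{E2} for $E_1$ and $E_2$; in both, the only term not already written as an explicit monomial in $\varepsilon$ and $h$ is $\hat l_{u^{\varepsilon}(t,x,y,z+\cdot)}(h)$ (resp.\ $\hat l_{u_h^{\varepsilon}(t,x,y,z+\cdot)}(h)$). Under (H1)--(H2) the function $\hat l_{\varphi}$ is given explicitly by~\eqref{consistency estimates}, so applying it with $\varphi=u^{\varepsilon}(t,x,y,z+\cdot)$ and $\varphi=u_h^{\varepsilon}(t,x,y,z+\cdot)$, together with the mollification estimates~\eqref{u_mollifer},~\eqref{uh_mollifer} and the Lipschitz regularity of $u,u_h$ in $z$ (namely $\|D_z u^{\varepsilon}\|_\infty\vee\|D_z u_h^{\varepsilon}\|_\infty\le C_0$, $\|D_z^2 u^{\varepsilon}\|_\infty\le 2C_0K_\zeta\varepsilon^{-1}$ and $\|D_z^2 u_h^{\varepsilon}\|_\infty\le 2C_0K_\zeta(\varepsilon+h^{1/2})\varepsilon^{-2}$), one obtains, on the range $h^{1/2}\le\varepsilon<1$,
\[
\hat l_{u^{\varepsilon}(t,x,y,z+\cdot)}(h)\;\vee\;\hat l_{u_h^{\varepsilon}(t,x,y,z+\cdot)}(h)\;\le\;C\big(h^{q_0}+\varepsilon^{-1}h^{q_0}+\varepsilon^{-1}h^{\frac{2-\alpha}{\alpha}}\big),
\]
with $C$ depending only on $C_0$, $K_\zeta$ and $C_{\alpha,\beta}$.

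Substituting these bounds into~\eqref{E1}--\eqref{E2} and using $\varepsilon+h^{1/2}\le 2\varepsilon$ on $\{h^{1/2}\le\varepsilon<1\}$, both $4C_0\varepsilon+E_1(\varepsilon,h)$ and $4C_0\varepsilon+E_2(\varepsilon,h)$ reduce to the same finite family of monomials $c_j\varepsilon^{-a_j}h^{b_j}$ with $a_j>0$, plus an $\varepsilon$-free $h^{q_0}$ term. For each such monomial the elementary fact $\min_{\varepsilon>0}\big(\varepsilon+\varepsilon^{-a_j}h^{b_j}\big)\lesssim h^{b_j/(1+a_j)}$ suggests taking $\varepsilon=h^{\theta}$ with $\theta:=\min_j b_j/(1+a_j)$; with this $\varepsilon$ one has $c_j\varepsilon^{-a_j}h^{b_j}=c_jh^{b_j-a_j\theta}\le c_jh^{\theta}$ for every $j$, while $h^{q_0}\le h^{\theta}$ since $\theta\le q_0$. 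Computing the exponents $b_j/(1+a_j)$ gives the list $\{\tfrac14,\ \tfrac16,\ \tfrac1{3\alpha},\ \tfrac{4\delta-3\alpha}{2\alpha(2\delta+3)},\ \tfrac{q_0}{2},\ \tfrac{2-\alpha}{2\alpha}\}$, the first four coming from the $h$-, $h^{1/2}$-, $h^{1/(2\alpha)}$- and $D^2_{xz}$-consistency monomials and the last two from the $\varepsilon^{-1}h^{q_0}$- and $\varepsilon^{-1}h^{(2-\alpha)/\alpha}$-pieces of $\hat l$. Since $\delta\in(\tfrac34\alpha\vee1,\alpha)$ and $\alpha\in(1,2)$ force $\tfrac{4\delta-3\alpha}{2\alpha(2\delta+3)}<\tfrac1{10}<\min\{\tfrac14,\tfrac16,\tfrac1{3\alpha}\}$ and $\tfrac{q_0}{2}<q_0$, the minimum of this list is exactly $\Gamma(\alpha,\delta,q_0)$; in particular $\theta=\Gamma<\tfrac12$, so the choice $\varepsilon=h^{\Gamma}$ indeed lies in the admissible range $[h^{1/2},1)$.

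Finally, with $\varepsilon=h^{\Gamma}$ one gets $4C_0\varepsilon+E_i(\varepsilon,h)\le Ch^{\Gamma}$ for $i=1,2$, and, using $h^{1/2}\le h^{\Gamma}$ (again because $\Gamma<\tfrac12$), Theorem~\ref{main theorem} yields $|u-u_h|\le 2C_0h^{1/2}+\max_i\min_{\varepsilon>0}\big(4C_0\varepsilon+E_i(\varepsilon,h)\big)\le Ch^{\Gamma(\alpha,\delta,q_0)}$ on $[0,1]\times\mathbb{R}^3$, which is Theorem~\ref{main theorem ex1}; Theorem~\ref{main theorem ex1 CLT} then follows at once from the identification $u_{1/n}(1,0,0,0)=\mathbb{\hat{E}}[\phi(S_n^1/\sqrt n,\,S_n^2/n,\,S_n^3/\sqrt[\alpha]{n})]$ used in Theorem~\ref{main theorem CLT}. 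The main obstacle is precisely the bookkeeping of the middle paragraph: one must track how the mollified $z$-derivative norms inside $\hat l$ depend on $\varepsilon$ (and, for $u_h^{\varepsilon}$, on $h$), enumerate every monomial produced in $E_1$ and $E_2$ (including those generated by the $\hat l$ terms), and verify that the three ``named'' exponents appearing in $\Gamma$ genuinely dominate the auxiliary exponents $\tfrac14,\tfrac16,\tfrac1{3\alpha},q_0$, so that the optimally balanced rate is exactly $\Gamma$ and not something smaller.
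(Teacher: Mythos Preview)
Your proposal is correct and follows essentially the same route as the paper: plug the explicit consistency function~\eqref{consistency estimates} into the error bounds of Theorem~\ref{main theorem} via the mollification estimates~\eqref{u_mollifer}--\eqref{uh_mollifer}, set $\varepsilon=h^{\gamma}$, and determine the optimal $\gamma$ by balancing the resulting monomials. The paper lists the constraints $\gamma\le 1-3\gamma$, $\gamma\le\tfrac12-2\gamma$, $\gamma\le\tfrac{4\delta-3\alpha}{6\alpha}-\tfrac{2\delta}{3}\gamma$, $\gamma\le q_0-\gamma$, $\gamma\le\tfrac{2-\alpha}{\alpha}-\gamma$ and uses the same observation $\tfrac{4\delta-3\alpha}{2\alpha(2\delta+3)}<\tfrac1{4\alpha+6}<\tfrac1{10}$ to conclude that the minimum is $\Gamma(\alpha,\delta,q_0)$; your bookkeeping via the exponents $b_j/(1+a_j)$ is just a repackaging of the same calculation, and your explicit handling of the lower bound (restricting to $h^{1/2}\le\varepsilon$ so that $\varepsilon+h^{1/2}\le2\varepsilon$, and checking $\Gamma<\tfrac12$) matches what the paper covers with the word ``Analogously''.
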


\begin{proof}
From Theorem \ref{main theorem} (i), combined with \eqref{u_mollifer} and the
consistency estimate \eqref{consistency estimates}, we have
\begin{align*}
u_{h}-u &  \leq2C_{0}h^{1/2}+8C_{0}\varepsilon+E_{2}(\varepsilon,h,\Vert
\phi \Vert_{\infty},C_{0},K_{\rho})\\
&  \leq2C_{0}h^{1/2}+8C_{0}\varepsilon+4C_{\alpha,\beta}C_{0}K_{\rho}\left[
2\varepsilon^{-1}h^{q_{0}}+\varepsilon^{-1}h^{\frac{2-\alpha}{\alpha}}\right]
\\
&  \quad+8M_{0}C_{0}K_{\rho}\left(  2\varepsilon^{-3}h+5\varepsilon
^{-2}h^{1/2}+\varepsilon^{-\frac{2\delta}{3}}h^{\frac{4\delta-3\alpha}%
{6\alpha}}+\varepsilon^{-1/2}h^{1/(2\alpha)}\right)  .
\end{align*}
Letting $\varepsilon=h^{\gamma}$ for some $\gamma>0$, we get:
\begin{align*}
u_{h}-u &  \leq2C_{0}h^{1/2}+8C_{0}h^{\gamma}+4C_{\alpha,\beta}C_{0}K_{\rho
}\left[  2h^{q_{0}-\gamma}+h^{\frac{2-\alpha}{\alpha}-\gamma}\right]  \\
&  \quad+8M_{0}C_{0}K_{\rho}\left(  2h^{1-3\gamma}+5h^{1/2-2\gamma}%
+h^{\frac{4\delta-3\alpha}{6\alpha}-\frac{2\delta}{3}\gamma}+h^{1/(2\alpha
)-\gamma/2}\right)  .
\end{align*}
To optimize the convergence rate, we select the maximal $\gamma$ satisfying
all the following constraints
\[%
\begin{cases}
\gamma \leq1-3\gamma,\\
\gamma \leq1/2-2\gamma,\\
\gamma \leq \dfrac{4\delta-3\alpha}{6\alpha}-\dfrac{2\delta}{3}\gamma,\\
\gamma \leq q_{0}-\gamma,\\
\gamma \leq \dfrac{2-\alpha}{\alpha}-\gamma,\\
\gamma \leq1/2.
\end{cases}
\]
For the fixed $\alpha \in(1,2)$,\ note that
\[
\sup \limits_{\frac{3}{4}\alpha \vee1<\delta<\alpha}\frac{4\delta-3\alpha
}{2\alpha(2\delta+3)}=\frac{1}{4\alpha+6}<\frac{1}{10}.
\]
Solving these inequalities gives the optimal value
\begin{equation}
\varepsilon=h^{\gamma},\quad \text{where }\gamma:=\min \left \{  \frac
{4\delta-3\alpha}{2\alpha(2\delta+3)},\frac{2-\alpha}{2\alpha},\frac{q_{0}}%
{2}\right \}  .\label{Gama}%
\end{equation}
Therefore,
\[
u_{h}-u\leq(10C_{0}+12C_{\alpha,\beta}C_{0}K_{\rho}+72M_{0}C_{0}K_{\rho
})h^{\gamma}\quad \text{in }[0,1]\times \mathbb{R}^{3}.
\]
Analogously,
\[
u-u_{h}\leq(12C_{0}+15C_{\alpha,\beta}C_{0}K_{\rho}+90M_{0}C_{0}K_{\rho
})h^{\gamma}\quad \text{in }[0,1]\times \mathbb{R}^{3}.
\]
This concludes the proof.
\end{proof}

\begin{remark}
Following the same reasoning, we find that:

\begin{itemize}
\item The convergence rate in the robust central limit theorem is $1/6$.

\item The convergence rate in the robust $\alpha$-stable central limit theorem
is $\min \left \{  \frac{2 - \alpha}{2\alpha}, \frac{q_{0}}{2} \right \}  $.
\end{itemize}

These results are consistent with those established in \cite{HL2020} and
\cite{HJL2021}.
\end{remark}

\subsection{The estimate of $\hat{l}(s,L_{0})$}

In what follows, $C_{\alpha}$ will denote a positive constant only depending
on $\alpha,q_{0}$, and $C_{\beta}$ and may change from line to line. For each
$\varphi \in C_{b}^{3}(\mathbb{R})$, using a change of variables and the
assumption (H1), we can deduce that for any $(s,z)\in(0,1]\times \mathbb{R}$
\begin{align*}
&  \frac{1}{s}\bigg \vert \mathbb{\hat{E}}\Big[\varphi(z+s^{\frac{1}{\alpha}%
}Z_{1})-\varphi(z)+spY_{1}+\frac{s}{2}aX_{1}^{2}\Big]\\
&  -s\sup \limits_{(F_{k},q,Q)\in \mathcal{L}\times \Gamma \times \Sigma
}\bigg \{ \int_{\mathbb{R}}\delta_{\lambda}\varphi(z)F_{k}(d\lambda
)+pq+\frac{1}{2}aQ\bigg \} \bigg \vert \\
&  \leq \frac{1}{s}\sup_{k\in \Lambda}\bigg \vert \int_{\mathbb{R}}%
\big(\varphi(z+s^{\frac{1}{\alpha}}\lambda)-\varphi(z)\big)dF_{W_{k}}%
(\lambda)-s\int_{\mathbb{R}}\delta_{\lambda}\varphi(z)F_{k}(d\lambda
)\bigg \vert \\
&  \leq \sup_{k\in \Lambda}\bigg \vert \int_{\mathbb{-\infty}}^{0}\delta
_{\lambda}\varphi(z)[\alpha \beta_{1,k}(s^{-\frac{1}{\alpha}}\lambda
)-\beta_{1,k}^{\prime}(s^{-\frac{1}{\alpha}}\lambda)s^{-\frac{1}{\alpha}%
}\lambda]|\lambda|^{-\alpha-1}d\lambda \\
&  +\int_{0}^{\infty}\delta_{\lambda}\varphi(z)[\alpha \beta_{2,k}(s^{-\frac
{1}{\alpha}}\lambda)-\beta_{2,k}^{\prime}(s^{-\frac{1}{\alpha}}\lambda
)s^{-\frac{1}{\alpha}}\lambda]\lambda^{-\alpha-1}d\lambda \bigg \vert.
\end{align*}
In the following, we consider the integral above along the positive half-line,
and the\ integral along the negative half-line can be similarly obtained. For
any given $k\in \Lambda$, we denote%
\begin{align*}
&  \left \vert \int_{0}^{\infty}\delta_{\lambda}\varphi(z)[\alpha \beta
_{2,k}(s^{-\frac{1}{\alpha}}\lambda)-\beta_{2,k}^{\prime}(s^{-\frac{1}{\alpha
}}\lambda)s^{-\frac{1}{\alpha}}\lambda]\lambda^{-\alpha-1}d\lambda \right \vert
\\
&  \leq \left \vert \int_{1}^{\infty}\delta_{\lambda}\varphi(z)[\alpha
\beta_{2,k}(s^{-\frac{1}{\alpha}}\lambda)-\beta_{2,k}^{\prime}(s^{-\frac
{1}{\alpha}}\lambda)s^{-\frac{1}{\alpha}}\lambda]\lambda^{-\alpha-1}%
d\lambda \right \vert \\
&  \text{\  \  \ }+\left \vert \int_{s^{\frac{1}{\alpha}}}^{1}\delta_{\lambda
}\varphi(z)[\alpha \beta_{2,k}(s^{-\frac{1}{\alpha}}\lambda)-\beta
_{2,k}^{\prime}(s^{-\frac{1}{\alpha}}\lambda)s^{-\frac{1}{\alpha}}%
\lambda]\lambda^{-\alpha-1}d\lambda \right \vert \\
&  \text{ \  \ }+\bigg \vert \int_{0}^{s^{\frac{1}{\alpha}}}\delta_{\lambda
}\varphi(z)[\alpha \beta_{2,k}(s^{-\frac{1}{\alpha}}\lambda)-\beta
_{2,k}^{\prime}(s^{-\frac{1}{\alpha}}\lambda)s^{-\frac{1}{\alpha}}%
\lambda]\lambda^{-\alpha-1}d\lambda \bigg \vert \\
&  :=I_{1}+I_{2}+I_{3}.
\end{align*}
Note that $\frac{\partial}{\partial \lambda}\delta_{\lambda}\varphi
(z)=D\varphi(z+\lambda)-D\varphi(z)$ and
\[
\frac{\partial}{\partial \lambda}\big(-\beta_{2,k}(s^{-\frac{1}{\alpha}}%
\lambda)\lambda^{-\alpha}\big)=\big(\alpha \beta_{2,k}(s^{-\frac{1}{\alpha}%
}\lambda)-\beta_{2,k}^{\prime}(s^{-\frac{1}{\alpha}}\lambda)s^{-\frac
{1}{\alpha}}\lambda \big)\lambda^{-\alpha-1}.
\]
Using integration by parts, we derive that
\begin{align*}
I_{1} &  =\bigg \vert \delta_{1}\varphi(z)\beta_{2,k}(s^{-\frac{1}{\alpha}%
})+\int_{1}^{\infty}\beta_{2,k}(s^{-\frac{1}{\alpha}}\lambda)(D\varphi
(z+\lambda)-D\varphi(z))\lambda^{-\alpha}d\lambda \bigg \vert \\
&  \leq2\left \Vert D\varphi \right \Vert _{\infty}\bigg (|\beta_{2,k}%
(s^{-\frac{1}{\alpha}})|+\int_{1}^{\infty}|\beta_{2,k}(s^{-\frac{1}{\alpha}%
}\lambda)|\lambda^{-\alpha}d\lambda \bigg),
\end{align*}
where we have used the fact that
\[
\delta_{1}\varphi(z)=\int_{0}^{1}(D\varphi(z+\theta)-D\varphi(z))d\theta
\leq2\left \Vert D\varphi \right \Vert _{\infty}.
\]
Also, by means of integration by parts, it follows that
\begin{align*}
I_{2} &  =\bigg \vert \delta_{s^{1/\alpha}}\varphi(z)\beta_{2,k}(1)s^{-1}%
-\delta_{1}\varphi(z)\beta_{2,k}(s^{-\frac{1}{\alpha}})\\
\text{ } &  \  \  \ +\int_{s^{\frac{1}{\alpha}}}^{1}\beta_{2,k}(s^{-\frac
{1}{\alpha}}\lambda)(D\varphi(z+\lambda)-D\varphi(z))\lambda^{-\alpha}%
d\lambda \bigg \vert \\
&  \leq \frac{1}{2}\left \Vert D^{2}\varphi \right \Vert _{\infty}|\beta
_{2,k}(1)|s^{\frac{2-\alpha}{\alpha}}+2\left \Vert D\varphi \right \Vert
_{\infty}|\beta_{2,k}(s^{-\frac{1}{\alpha}})|\\
&  \text{ \  \ }+\left \Vert D^{2}\varphi \right \Vert _{\infty}\int_{0}^{1}%
|\beta_{2,k}(s^{-\frac{1}{\alpha}}\lambda)|\lambda^{1-\alpha}d\lambda
\end{align*}
where we have used the fact that
\[
\delta_{s^{1/\alpha}}\varphi(z)=\int_{0}^{1}\int_{0}^{1}D^{2}\varphi
(z+\tau \theta s^{\frac{1}{\alpha}})s^{\frac{2}{\alpha}}\theta d\tau
d\theta \leq \frac{1}{2}\left \Vert D^{2}\varphi \right \Vert _{\infty}s^{\frac
{2}{\alpha}}.
\]
By changing variables, we obtain that
\begin{align*}
I_{3} &  \leq \left \Vert D^{2}\varphi \right \Vert _{\infty}\int_{0}^{s^{\frac
{1}{\alpha}}}\big \vert \alpha \beta_{2,k}(s^{-\frac{1}{\alpha}}\lambda
)-\beta_{2,k}^{\prime}(s^{-\frac{1}{\alpha}}\lambda)s^{-\frac{1}{\alpha}%
}\lambda \big \vert \lambda^{1-\alpha}d\lambda \\
&  =\left \Vert D^{2}\varphi \right \Vert _{\infty}s^{\frac{2-\alpha}{\alpha}%
}\int_{0}^{1}\big \vert \alpha \beta_{2,k}(\lambda)-\beta_{2,k}^{\prime}%
(\lambda)\lambda \big \vert \lambda^{1-\alpha}d\lambda.
\end{align*}
Since for any $k\in \Lambda$%
\[
\beta_{2,k}(z)=(1-F_{W_{k}}(z))z^{\alpha}-\frac{k_{2}}{\alpha},\text{\  \ }%
z\geq0,
\]
it is straightforward to check that the following terms are uniformly bounded
(which we also denote by $C_{\beta}$)
\[%
\begin{array}
[c]{lll}%
\displaystyle|\beta_{2,k}(1)|, &  & \displaystyle \int_{0}^{1}\frac
{|\alpha \beta_{2,k}(\lambda)-\beta_{2,k}^{\prime}(\lambda)\lambda|}%
{\lambda^{\alpha-1}}d\lambda.
\end{array}
\]
Thus, using the condition (H2), we conclude that for all $(s,z)\in
(0,1]\times \mathbb{R}$
\begin{align*}
&  \frac{1}{s}\bigg \vert \mathbb{\hat{E}}\big[\varphi(z+s^{\frac{1}{\alpha}%
}Z_{1})-\varphi(z)\big]-s\sup \limits_{F_{k}\in \mathcal{L}}\int_{\mathbb{R}%
}\delta_{\lambda}\varphi(z)F_{k}(d\lambda)\bigg \vert \\
&  \leq4\left \Vert D\varphi \right \Vert _{\infty}\sup_{k\in \Lambda
}\bigg \{|\beta_{1,k}(-s^{-\frac{1}{\alpha}})|+|\beta_{2,k}(s^{-\frac
{1}{\alpha}})|\\
&  \text{ \  \ }+\int_{1}^{\infty}\left[  |\beta_{1,k}(-s^{-\frac{1}{\alpha}%
}\lambda)|+|\beta_{2,k}(s^{-\frac{1}{\alpha}}\lambda)|\right]  \lambda
^{-\alpha}d\lambda \bigg \} \\
&  \text{ \  \ }+\left \Vert D^{2}\varphi \right \Vert _{\infty}\sup_{k\in \Lambda
}\int_{0}^{1}\left[  |\beta_{1,k}(-s^{-\frac{1}{\alpha}}\lambda)|+|\beta
_{2,k}(s^{-\frac{1}{\alpha}}\lambda)|\right]  \lambda^{1-\alpha}d\lambda \\
&  \text{ \  \ }+\left \Vert D^{2}\varphi \right \Vert _{\infty}s^{\frac{2-\alpha
}{\alpha}}\sup_{k\in \Lambda}\bigg \{|\beta_{1,k}(-1)|+|\beta_{2,k}(1)|\\
&  \text{ \  \ }+\int_{0}^{1}\left[  |\alpha \beta_{1,k}(-\lambda)+\beta
_{1,k}^{\prime}(-\lambda)\lambda|+|\alpha \beta_{2,k}(\lambda)-\beta
_{2,k}^{\prime}(\lambda)\lambda|\right]  \lambda^{1-\alpha}d\lambda \bigg \} \\
&  \leq C_{\alpha,\beta}\left[  (\left \Vert D\varphi \right \Vert _{\infty
}+\left \Vert D^{2}\varphi \right \Vert _{\infty})s^{q_{0}}+\left \Vert
D^{2}\varphi \right \Vert _{\infty}s^{\frac{2-\alpha}{\alpha}}\right]  :=\hat
{l}(s,L_{0}),
\end{align*}
where $C_{\alpha,\beta}$ is a constant depending only on $\alpha$ and
$C_{\beta}$.

\section{Conclusion}

In this paper, we have established the convergence rates of the universal
robust central limit theorem previously derived in our earlier work
\cite{HJLP2022}, by analyzing a probabilistic approximation scheme for a
nonstandard, fully nonlinear second-order PIDE associated with nonlinear
L\'evy processes.

While our results address a gap within the sublinear expectation framework,
there has recently been growing interest in extending robust limit theorems
and related quantitative results to the setting of convex expectations; see
\cite{BDKN2025,BK2023,BK2022,BKN2023} for theoretical developments, and
\cite{BKS2024,NNPS2025} for applications. In a related direction,
Blessing et al.~\cite{BJKL2023} employed similar ideas to obtain
Chernoff-type approximations, leading to convergence rates for the robust
central limit theorem and the law of large numbers under convex expectations.
A natural and challenging direction for future research is to establish
analogous convergence rates for the $\alpha$-stable case and, more generally,
for the universal robust limit theorem in the convex expectation framework.
These problems remain open and are left for future investigation.

\end{document}